\numberwithin{equation}{subsection}
\newtheoremstyle{theorem}
  {10pt}
  {10pt}
  {\sl}
  {\parindent}
  {\bf}
  {. }
  { }
  {}
\theoremstyle{theorem}
\newtheorem{theorem}{Theorem}
\newtheorem{corollary}{Corollary}
\begin{document}\large

\title{\textbf{Average-tempered stable subordinators \\ with applications}}
\author{Weixuan Xia\footnote{Correspondence address: Department of Finance, Boston University Questrom School of Business. Email: \underline{gabxia@bu.edu}.}}
\date{2020}
\maketitle
\thispagestyle{plain}

\begin{abstract}
  In this paper the running average of a subordinator with a tempered stable distribution is considered. We investigate a family of previously unexplored infinite-activity subordinators induced by the probability distribution of the running average process and determine their jump intensity measures. Special cases including gamma processes and inverse Gaussian processes are discussed. Then we derive easily implementable formulas for the distribution functions, cumulants, and moments, as well as provide explicit estimates for their asymptotic behaviors. Numerical experiments are conducted for illustrating the applicability and efficiency of the proposed formulas. Two important extensions of the running average process and its equi-distributed subordinator are examined with concrete applications to structural degradation modeling and financial derivatives pricing, where their advantages relative to several existing models are highlighted together with the mention of Euler discretization and compound Poisson approximation techniques. \vspace{0.1in}\\
  {\sc MSC2020 Classifications:} 60E07; 60E10; 60G51 \vspace{0.1in}\\
  {\sc Key Words:} running average; tempered stable distribution; subordinators; asymptotic behaviors; degradation modeling; derivatives pricing
\end{abstract}

\vspace{0.2in}

\newcommand{\dd}{{\rm d}}
\newcommand{\pd}{\partial}
\newcommand{\ii}{{\rm i}}
\newcommand{\E}{\mathbb{E}}
\newcommand{\PP}{\mathbb{P}}
\newcommand{\Var}{\mathrm{var}}
\newcommand{\Skew}{\mathrm{skew}}
\newcommand{\EKurt}{\mathrm{ekurt}}
\newcommand{\Cov}{\mathrm{cov}}
\newcommand{\EE}{\mathrm{E}}
\newcommand{\Gf}{\mathrm{\Gamma}}
\renewcommand{\Re}{\mathrm{Re}}
\renewcommand{\Im}{\mathrm{Im}}
\renewcommand{\Lambda}{\varLambda}
\renewcommand{\Pi}{\varPi}

\section{Introduction}\label{sec:1}

Let $(\Omega,\mathcal{F},\PP;\mathbb{F})$ be a continuous-time stochastic basis, where the filtration $\mathbb{F}\equiv\{\mathscr{F}_{t}\}_{t\geq0}$ satisfies the usual conditions and with respect to which all stochastic processes are by default considered in this paper. A tempered stable subordinator $X\equiv(X_{t})_{t\geq0}$ is defined as a L\'{e}vy process with a marginal one-sided tempered stable distribution. That said, it is immediately understood that $X$ starts from 0, $\PP$-a.s., has independent and stationary increments, and has $\PP$-\text{a.s.} c\`{a}dl\`{a}g sample paths.

The one-sided tempered stable distribution forms an infinitely divisible family with three parameters - $a>0$, $b>0$, and $c\in(0,1)$, denoted by $\mathrm{TS}(a,b;c)$. More specifically, the random variable $X_{t}$ admits the following Laplace transform,
\begin{equation}\label{1.1}
  \bar{f}_{X_{t}}(u)\equiv\bar{f}_{X_{t}}(u|a,b;c):=\E e^{-uX_{t}}=\exp(at\Gf(-c)((b+u)^{c}-b^{c})),\quad u\in\mathds{C}\setminus(-\infty,-b],\;t\geq0,
\end{equation}
which clearly satisfies $\bar{f}_{X_{t}}(u)=(\bar{f}_{X_{1}}(u))^{t}$, for every $t\geq0$. In particular, we shall write
\begin{equation*}
  X_{t}\overset{\rm d.}{=}\mathrm{TS}(at,b;c),\quad t>0.
\end{equation*}
The tempered stable subordinator is known to be a purely discontinuous nondecreasing L\'{e}vy process with no Brownian component. According to the L\'{e}vy-Khintchine representation, we have the Laplace exponent
\begin{equation*}
  \log\bar{f}_{X_{t}}(u)=t\bigg(-\alpha u+\frac{\beta^{2}u^{2}}{2}+\int_{\mathds{R}\setminus\{0\}}\big(e^{-ux}-1+ux\mathds{1}_{\{|x|<1\}}\big)\nu(\dd x)\bigg),\quad t\geq0,
\end{equation*}
in which the L\'{e}vy triplet $(\alpha,\beta,\nu)$\footnote{In particular, we refer to $\alpha$ as the drift component, $\beta$ the Brownian component, and $\nu$ the jump intensity measure or L\'{e}vy measure.} of $X$, for constant $\alpha\in\mathds{R}$ and $\beta\geq0$ and some Borel measure $\nu$ on $\mathds{R}\setminus\{0\}$ satisfying $\int_{\mathds{R}\setminus\{0\}}(1\wedge x^{2})\nu(\dd x)<\infty$, is specified by
\begin{align}\label{1.2}
  \alpha&=ab^{c-1}(\Gf(1-c)-\Gf(1-c,b)),\\
  \beta&=0, \nonumber\\
  \ell(x):=\frac{\nu(\dd x)}{\dd x}&=\frac{ae^{-bx}}{x^{c+1}}\mathds{1}_{(0,\infty)}(x),\quad x\in\mathds{R}, \nonumber
\end{align}
where $\Gf(\cdot)$ and $\Gf(\cdot,\cdot)$ denote the usual gamma function and the upper incomplete gamma function, respectively, and $\ell$ is termed the L\'{e}vy density associated with $\nu$. Clearly, the L\'{e}vy density is itself not integrable but has a finite Blumenthal-Getoor index\footnote{This index was initially introduced in [Blumenthal and Getoor, 1961] \cite{BG} as a measure of the path smoothness of a L\'{e}vy process. The larger the index, the more irregular the paths.} given by,
\begin{equation*}
  \mathbf{B}(X):=\inf\bigg\{p>0:\int_{|x|<1}|x|^{p}\ell(x)\dd x<\infty\bigg\}=c<1,
\end{equation*}
which indicates that $X$ is an infinite-activity process whose sample paths are of finite variation.

Many well-known L\'{e}vy processes are special cases of the tempered stable subordinator, including the gamma process and the inverse Gaussian process, which are obtained for $c\searrow0$ and $c=1/2$, respectively. Indeed, we have $\mathrm{TS}(at,b;c)\rightarrow\mathrm{G}(at,b)$ as $c\searrow0$ and $\mathrm{TS}(at,b;1/2)=\mathrm{IG}(at,b)$, and write $(X_{t}|c\searrow0)=(G_{t})=G$ and $(X_{t}|c=1/2)=(I_{t})\equiv I$. In particular, we have the specialized Laplace transforms
\begin{equation}\label{1.3}
  \bar{f}_{G_{t}}(u):=\E e^{-uG_{t}}=\bigg(1+\frac{u}{b}\bigg)^{-at}\quad\text{and}\quad\bar{f}_{I_{t}}(u):=\E e^{-uI_{t}}=\exp(2\sqrt{\pi}at(\sqrt{b}-\sqrt{b+u})),
\end{equation}
for $u\in\mathds{C}\setminus(-\infty,-b]$ and any $t\geq0$. We note that tempered stable subordinators and their independent differences have been thoroughly explored in the literature. To name a few, [Schoutens, 2003, $\S$5.3.6] \cite{S1} gave the crucial statistical properties of the class of tempered stable processes and [Rosi\'{n}ski, 2007] \cite{R1} was particularly concerned with their exhibition of stable and Gaussian tendencies in different time frames, while a more comprehensive characterization covering limiting distributions and parameter estimation procedures was presented in [K\"{u}chler and Tappe, 2013] \cite{KT}.

Importantly, the tempered stable subordinator $X$, as well as its two aforementioned special cases, have been widely applied in many fields of science and engineering. For instance, in structural engineering the gamma process has been a popular model for the random occurrence of degradation of certain structural components, and thus conduces to determine optimal inspection and maintenance, since its debut in reliability analysis in [Abdel-Hameed, 1975] \cite{A}. In this respect, [van Noortwijk, 2009] \cite{vN} also provided a survey on the applications of the gamma process. Such degradation phenomena can certainly be modeled by the structurally more general tempered stable subordinator, though using the gamma distribution has its own advantage in terms of simplicity and amenability to parameter estimation. On the other hand, Gaussian mixtures of the tempered stable subordinator are widely applied in mathematical finance. These include the variance gamma processes which were proposed and studied in [Madan and Seneta, 1990] \cite{MS} and [Madan et al, 1998] \cite{MCC} as well as the normal inverse Gaussian processes discussed in [Barndorff-Nielsen, 1997] \cite{B1}, in the context of modeling stock returns and the pricing of financial instruments with short-term large jumps. Remarkably, the tempered stable distribution incorporates positive skewness and excess kurtosis, but yet has all finite moments, which is a very desirable property when it comes to financial modeling.

In this paper we are interested in studying the running average of the tempered stable subordinator, $\tilde{X}\equiv(\tilde{X}_{t})$, as simply given by the following time-scaled integral functional of $X$,
\begin{equation}\label{1.4}
  \tilde{X}_{t}:=\frac{1}{t}\int^{t}_{0}X_{s}\dd s,\quad t>0,
\end{equation}
with $\tilde{X}_{0}=0$, $\PP$-a.s. Equivalently, using It\^{o}'s formula we can write
\begin{equation*}
  \tilde{X}_{t}=\int^{t}_{0}\frac{X_{s}-\tilde{X}_{s}}{s}\dd s,\quad t\geq0,
\end{equation*}
so that it is immediately understood that $\tilde{X}$ has $\PP$-\text{a.s.} nondecreasing and hence nonnegative sample paths, which are also continuous and of finite variation. In this connection, (\ref{1.4}) naturally forms a model for the condition of a structural component subject to stochastic degradation with memory in a continuous manner, which eases analysis of the probability distribution of degradation, while it gives rise to alternative pure jump models in the pricing of financial derivatives.

The remainder of this paper is organized as follows. In Section \ref{sec:2} we provide a complete characterization of the probability distribution of the average-tempered stable subordinator $(\tilde{X}_{t})$ at any fixed time point, which simultaneously leads to a previously unexplored purely discontinuous L\'{e}vy process with infinite activity. For each semi-closed-form formula corresponding asymptotic formulas are derived to make up its deficiency when the argument takes extreme values. Section \ref{sec:3} then serves to illustrate the computational efficiency of the proposed formulas by presenting tables and graphs across different parameter choices. In Section \ref{sec:4}, applications are discussed along two aspects. We establish a degradation model directly based on $\tilde{X}$ as well as construct stock return models as Gaussian mixtures of the newly induced subordinator, whose performance is subsequently evaluated by empirical modeling.

\vspace{0.2in}

\section{Main results}\label{sec:2}

This section provides the main formulas regarding the distribution of $\tilde{X}_{t}$, subject to the parametrization $\{a,b;c\}$ and where $t\geq0$ is treated as generic. We start with the Laplace transform, which generalizes its characteristic function and gives important information on the path properties of a new family of subordinators, and then proceed to deriving the distribution functions.

The Laplace transform of $\tilde{X}_{t}$ takes the following form.

\begin{theorem}[\textbf{Laplace transform}]\label{th:1}
\begin{align}\label{2.1}
  &\bar{f}_{\tilde{X}_{t}}(u)\equiv\bar{f}_{\tilde{X}_{t}}(u|a,b;c):=\E e^{-u\tilde{X}_{t}}=\exp\frac{at\Gf(-c)((b+u)^{c+1}-b^{c}(b+(c+1)u))}{(c+1)u},\\
  &\quad u\in\mathds{C}\setminus(-\infty,-b],\;t\geq0. \nonumber
\end{align}
\end{theorem}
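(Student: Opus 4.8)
The plan is to compute $\E e^{-u\tilde{X}_t}$ directly from the representation \eqref{1.4} by conditioning on the whole path of $X$ and exploiting the Lévy structure. Writing $\tilde{X}_t=\frac{1}{t}\int_0^t X_s\,\dd s$, the first step is to approximate the integral by Riemann sums $\frac{1}{t}\sum_{k} X_{s_k}\Delta s$ and use the fact that $X_{s_k}$ can be decomposed into independent increments; alternatively, and more cleanly, I would use Fubini together with the stochastic Fubini theorem to rewrite $\int_0^t X_s\,\dd s=\int_0^t (t-s)\,\dd X_s$, so that $\tilde X_t=\frac{1}{t}\int_0^t(t-s)\,\dd X_s$. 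This casts $\tilde X_t$ as a deterministic-kernel integral against the Lévy process $X$, i.e.\ a weighted sum of independent infinitesimal increments with weight function $g(s)=(t-s)/t$ on $[0,t]$.

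The second step is to apply the standard exponential formula for integrals of deterministic functions against a Lévy process: for a subordinator with Laplace exponent $\psi$ defined by $\E e^{-u X_1}=e^{\psi(u)}$ (here, by \eqref{1.1}, $\psi(u)=a\Gf(-c)((b+u)^c-b^c)$), one has
\begin{equation*}
  \E\exp\Big(-u\int_0^t g(s)\,\dd X_s\Big)=\exp\Big(\int_0^t \psi\big(u\,g(s)\big)\,\dd s\Big),
\end{equation*}
valid whenever $u\,g(s)$ stays in the domain of $\psi$, which holds here since $g$ takes values in $[0,1]$ and $u\in\mathds{C}\setminus(-\infty,-b]$ forces $b+u\,g(s)\notin(-\infty,0]$. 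Substituting $g(s)=(t-s)/t$ and changing variables $r=u(t-s)/t$ (so $\dd s=-(t/u)\,\dd r$, with $r$ running from $u$ down to $0$) turns the exponent into $\frac{t}{u}\int_0^u \psi(r)\,\dd r$. The third step is then the elementary evaluation
\begin{equation*}
  \int_0^u\psi(r)\,\dd r=a\Gf(-c)\int_0^u\big((b+r)^c-b^c\big)\,\dd r=a\Gf(-c)\Big(\frac{(b+u)^{c+1}-b^{c+1}}{c+1}-b^c u\Big),
\end{equation*}
and multiplying by $t/u$ and simplifying the bracket to $\dfrac{(b+u)^{c+1}-b^c(b+(c+1)u)}{(c+1)u}$ gives exactly \eqref{2.1}. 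Finally I would note that both sides are analytic in $u$ on the stated cut plane (the right-hand side has a removable singularity at $u=0$), so the identity, established first for $u>0$ by the probabilistic argument, extends by analytic continuation to all of $\mathds{C}\setminus(-\infty,-b]$.

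The main obstacle is rigorously justifying the stochastic-Fubini step $\int_0^t X_s\,\dd s=\int_0^t(t-s)\,\dd X_s$ and the exponential formula in the complex-$u$ regime. The pathwise identity is harmless since $X$ has càdlàg finite-variation paths (integration by parts for Stieltjes integrals, using $X_0=0$), so the real work is the exponential formula: I would first establish it for real $u>0$ via the independent-increments decomposition and the multiplicativity $\bar f_{X_t}(u)=(\bar f_{X_1}(u))^t$ applied to each block of a partition, passing to the limit by dominated convergence (all quantities are bounded by $1$ for a subordinator and $u>0$); then the extension to complex $u$ off the branch cut is the analytic-continuation remark above, after checking that the right-hand side of \eqref{2.1} is finite and analytic there — the only delicate point being the behavior near $u=0$, which is resolved by expanding $(b+u)^{c+1}$ to second order and seeing the $1/u$ singularity cancel.
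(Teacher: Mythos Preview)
Your proposal is correct and follows essentially the same approach as the paper: both reduce to the identity $\log\bar{f}_{\tilde{X}_{t}}(u)=t\int_{0}^{1}\psi(ux)\,\dd x$ with $\psi(v)=a\Gf(-c)((b+v)^{c}-b^{c})$ (the paper's (2.2)) and then evaluate the elementary integral. The only difference is cosmetic: the paper obtains this identity by citing a general result on integral functionals of L\'{e}vy processes, whereas you derive it self-containedly via the integration-by-parts rewrite $\int_{0}^{t}X_{s}\,\dd s=\int_{0}^{t}(t-s)\,\dd X_{s}$ together with the exponential formula for deterministic integrands, and you are more explicit about the analytic-continuation step.
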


\begin{proof}
A general connection between the characteristic function, namely the Fourier transform of the density function, of a L\'{e}vy process and that of its integral functional is well established. See, e.g., [Xia, 2020, $\S$2] \cite{X}. Thus, according to (\ref{1.1}) we have directly the Laplace exponent
\begin{equation}\label{2.2}
  \log\bar{f}_{\tilde{X}_{t}}(u)=at\Gf(-c)\bigg(\int^{1}_{0}(b+u x)^{c}\dd u-b^{c}\bigg),
\end{equation}
which yields the result after elementary calculations. Then we observe that $\bar{f}_{\tilde{X}_{t}}(u)$ has a removable singularity at the origin and is analytic over the complex $u$-plane except for the principal branch of $(b+u)^{c+1}$.
\end{proof}

In the sequel we shall refer to the family of distributions determined by $\{\bar{f}_{\tilde{X}_{1}}(u;c):c\in(0,1)\}$ as average-tempered stable distributions, denoted by $\mathrm{ATS}(a,b;c)$. Indeed, from Theorem \ref{th:1} it is a straightforward exercise to show that, if $\tilde{X}^{(1)}_{1}\overset{\rm d.}{=}\mathrm{ATS}(a_{1},b;c)$ and $\tilde{X}^{(2)}_{1}\overset{\rm d.}{=}\mathrm{ATS}(a_{2},b;c)$ are independent with $a_{1},a_{2}>0$, then
\begin{equation*}
  \tilde{X}^{(1)}_{1}+\tilde{X}^{(2)}_{1}\overset{\rm d.}{=}\mathrm{ATS}(a_{1}+a_{2},b;c)
\end{equation*}
and that for any scaling factor $\rho>0$ we have
\begin{equation*}
  \rho\tilde{X}_{1}\overset{\rm d.}{=}\mathrm{ATS}\bigg(a\rho^{c},\frac{b}{\rho};c\bigg),
\end{equation*}
which is exactly the same scaling property of the $\mathrm{TS}(a,b;c)$ distribution. Refer to [K\"{u}chler and Tappe, 2013, $\S$4] \cite{KT}.

In fact, the Laplace transform (\ref{2.1}) defines an infinitely divisible distribution on $\mathds{R}_{++}\equiv(0,\infty)$, and we claim that
\begin{equation}\label{2.3}
  \tilde{X}_{t}\overset{\rm d.}{=}\Lambda_{t}\overset{\rm d.}{=}\mathrm{ATS}(at,b;c),\quad t\geq0,
\end{equation}
where $\Lambda\equiv(\Lambda_{t})$ is a L\'{e}vy process having the L\'{e}vy triplet $(\tilde{\alpha},\tilde{\beta},\tilde{\nu})$, as present in the L\'{e}vy-Khintchine representation
\begin{equation}\label{2.4}
  \log\bar{f}_{\tilde{X}_{t}}(u)=t\bigg(-\tilde{\alpha} u+\frac{\tilde{\beta}^{2}u^{2}}{2}+\int_{\mathds{R}\setminus\{0\}}\big(e^{-ux}-1+ux\mathds{1}_{\{|x|<1\}}\big)\tilde{\nu}(\dd x)\bigg),\quad t\geq0,
\end{equation}
wherever $\bar{f}_{\tilde{X}_{t}}$ is well-defined. We present the next corollary which determines the triplet and hence describes the path properties of $\Lambda$.

\begin{corollary}[\textbf{L\'{e}vy triplet}]\label{cr:1}
We have in (\ref{2.4})
\begin{align}\label{2.5}
  \tilde{\alpha}&=\frac{a}{2b^{1-c}}\bigg(\Gf(1-c)+\frac{\Gf(2-c,b)-2\Gf(1-c,b)-b^{2}\Gf(-c,b)}{c+1}\bigg),\\
  \tilde{\beta}&=0, \nonumber\\
  \tilde{\ell}(x)\equiv\frac{\tilde{\nu}(\dd x)}{\dd x}&=\frac{a}{c+1}\bigg(\frac{e^{-bx}}{x^{c+1}}-b^{c+1}\Gf(-c,bx)\bigg)\mathds{1}_{(0,\infty)}(x),\quad x\in\mathds{R}, \nonumber
\end{align}
where $\tilde{\ell}$ is the L\'{e}vy density associated with $\tilde{\nu}$.
\end{corollary}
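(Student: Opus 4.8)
The strategy is to bring the Laplace exponent (\ref{2.2}) into the canonical finite-variation form of a subordinator and then read off the triplet term by term. Since $X$ is driftless in its finite-variation representation --- the drift $\alpha$ of (\ref{1.2}) equals $\int_{0}^{1}x\ell(x)\dd x$ --- its Laplace exponent may be written $\psi_{X}(u):=a\Gf(-c)((b+u)^{c}-b^{c})=\int_{0}^{\infty}(e^{-ux}-1)\ell(x)\dd x$, and (\ref{2.2}) reads
\[
  \log\bar{f}_{\tilde{X}_{t}}(u)=t\int_{0}^{1}\psi_{X}(ux)\dd x=t\int_{0}^{1}\!\int_{0}^{\infty}(e^{-uvx}-1)\ell(x)\dd x\dd v.
\]
For $u\geq0$ this double integral converges absolutely, since $|e^{-uvx}-1|\leq(ux)\wedge1\leq\max(u,1)\,(x\wedge1)$ and $\int_{0}^{\infty}(x\wedge1)\ell(x)\dd x<\infty$, so Tonelli's theorem applies; the passage to $u\in\mathds{C}\setminus(-\infty,-b]$ then follows by analytic continuation, exactly as in the proof of Theorem \ref{th:1}.

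Next I would substitute $y=vx$ in the inner variable, turning $\int_{0}^{1}(e^{-uvx}-1)\dd v$ into $x^{-1}\int_{0}^{x}(e^{-uy}-1)\dd y$, and then interchange the $x$- and $y$-integrations over the region $\{0<y<x<\infty\}$. This yields the canonical subordinator representation
\[
  \log\bar{f}_{\tilde{X}_{t}}(u)=t\int_{0}^{\infty}(e^{-uy}-1)\tilde{\ell}(y)\dd y,\qquad \tilde{\ell}(y)=\int_{y}^{\infty}\frac{\ell(x)}{x}\dd x=a\int_{y}^{\infty}\frac{e^{-bx}}{x^{c+2}}\dd x.
\]
A single integration by parts --- differentiating $e^{-bx}$ and integrating $x^{-c-2}$ --- together with the substitution $z=bx$ in the surviving integral, which renders it $b^{c}\Gf(-c,by)$, produces precisely the density $\tilde{\ell}$ of (\ref{2.5}). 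Because $\tilde{\ell}(y)\sim\frac{a}{c+1}y^{-c-1}$ as $y\downarrow0$ and decays exponentially as $y\to\infty$, we have $\int_{0}^{\infty}(1\wedge x)\tilde{\ell}(x)\dd x<\infty$; hence $\Lambda$ is a finite-variation pure-jump subordinator, so there is no Gaussian component and $\tilde{\beta}=0$.

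It remains to recover $\tilde{\alpha}$ by recasting the last display in the $\mathds{1}_{\{|x|<1\}}$-truncated form of (\ref{2.4}); matching the two representations forces $\tilde{\alpha}=\int_{0}^{1}x\tilde{\ell}(x)\dd x$. I would evaluate this either from the explicit $\tilde{\ell}$ by two further integrations by parts, or --- more economically --- by exchanging the order of integration once more,
\[
  \tilde{\alpha}=\int_{0}^{\infty}\frac{\ell(y)}{y}\int_{0}^{1\wedge y}x\dd x\dd y=\frac{1}{2}\int_{0}^{1}y\ell(y)\dd y+\frac{1}{2}\int_{1}^{\infty}\frac{\ell(y)}{y}\dd y,
\]
whose two pieces reduce, after $z=by$ and one integration by parts, to combinations of $\Gf(1-c,b)$, $\Gf(-c,b)$ and a term in $e^{-b}$. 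The main obstacle I anticipate is the final, cosmetic stage: the raw expression carries an $e^{-b}$ term and a $\Gf(2-c)$ factor absent from (\ref{2.5}), and these must be eliminated via the identities $\Gf(2-c)=(1-c)\Gf(1-c)$ and $\Gf(s+1,b)=s\Gf(s,b)+b^{s}e^{-b}$; in the integration-by-parts route one must additionally check that the boundary term $x^{2}\Gf(-c,bx)$ vanishes as $x\downarrow0$, which holds since $\Gf(-c,z)\sim z^{-c}/c$ near the origin. Everything else is routine bookkeeping with incomplete gamma functions.
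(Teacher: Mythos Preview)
Your proof is correct and takes a genuinely different route from the paper's. The paper works from the closed-form Laplace exponent of Theorem~\ref{th:1} and solves the resulting Fredholm equation $\int_{0}^{\infty}(e^{-ux}-1)\tilde{\ell}(x)\dd x=\cdots$ by an algebraic ansatz $\tilde{\ell}=\tilde{\ell}_{0}+\tilde{\ell}_{1}$, matching one piece to a scaled copy of the tempered-stable density $\ell$ and identifying the other via a tabulated inverse Laplace transform from Bateman. You instead return to the pre-integrated form (\ref{2.2}) and, by two applications of Fubini, derive the general relation $\tilde{\ell}(y)=\int_{y}^{\infty}x^{-1}\ell(x)\dd x$ directly; the explicit formula then drops out of a single integration by parts. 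Your approach is more elementary (no transform tables) and more general --- the identity $\tilde{\ell}(y)=\int_{y}^{\infty}x^{-1}\ell(x)\dd x$ holds for the running average of \emph{any} driftless subordinator, not just the tempered-stable one. It also makes the positivity of $\tilde{\ell}$ immediate, whereas the paper must verify this separately by integrating $\Gf(-c,bx)$ by parts. The paper's ansatz, in turn, displays $\tilde{\ell}$ explicitly as a scaled copy of $\ell$ plus a correction, which is structurally suggestive. Your anticipated bookkeeping for $\tilde{\alpha}$ is accurate: applying the recurrence $\Gf(s+1,b)=s\Gf(s,b)+b^{s}e^{-b}$ at $s=-c-1$ and $s=1-c$ eliminates the stray $e^{-b}$ term and reconciles your expression with (\ref{2.5}).
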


\begin{proof}
First, from comparing (\ref{2.1}) with (\ref{2.4}) it is immediate that $\tilde{\beta}=0$, which signifies the absence of any Brownian component. Since $\tilde{X}$, and therefore $\Lambda$, is nonnegative, $\PP$-a.s., in the L\'{e}vy-Khintchine representation we must have $\tilde{\nu}((-\infty,0))=0$. Suppose that $\tilde{\nu}$ is non-atomic and, in particular, differentiable over $\mathds{R}_{++}$, admitting that $\tilde{\nu}(\dd x)=\tilde{\ell}(x)\dd x$ for $x>0$, i.e., a L\'{e}vy density exists, which is further assumed to be such that
\begin{equation}\label{2.6}
  \int^{\infty}_{0+}x\tilde{\ell}(x)\dd x<\infty.
\end{equation}
Under the assumption (\ref{2.6}), the integrand in the right-hand side of (\ref{2.4}) is separable and consequently the terms linear in $u$ have to cancel with each other. Hence, the drift component is chosen as
\begin{equation}\label{2.7}
  \tilde{\alpha}=\int^{1}_{0+}x\tilde{\ell}(x)\dd x.
\end{equation}
Finding the L\'{e}vy density comes down to solving a Fredholm integral equation of the first kind, namely
\begin{equation*}
  \int^{\infty}_{0+}(e^{-ux}-1)\tilde{\ell}(x)\dd x=\frac{a\Gf(c)((b+u)^{c+1}-b^{c}(b+(c+1)u))}{(c+1)u},\quad u\in\mathds{C}\setminus(-\infty,-b].
\end{equation*}
To this end after some inspection of the right-hand side of the equation we employ the ansatz
\begin{equation}\label{2.8}
  \tilde{\ell}(x)=\tilde{\ell}_{0}(x)+\tilde{\ell}_{1}(x),
\end{equation}
where $\tilde{\ell}_{0}$ and $\tilde{\ell}_{1}$ respectively solve
\begin{equation}\label{2.9}
  \int^{\infty}_{0+}(e^{-ux}-1)\tilde{\ell}_{0}(x)\dd x=\frac{a\Gf(-c)((b+u)^{c}-b^{c})}{c+1}
\end{equation}
and
\begin{equation}\label{2.10}
  \int^{\infty}_{0+}(e^{-ux}-1)\tilde{\ell}_{1}(x)\dd x=\frac{a\Gf(-c)}{c+1}\bigg(\frac{b(b+u)^{c}-b^{c+1}}{u}-cb^{c}\bigg).
\end{equation}
The case of (\ref{2.9}) is essentially the same as finding the L\'{e}vy density of the tempered stable subordinator $(X_{t})$, except with a scaling factor $1/(c+1)$. Due to (\ref{1.2}) we have immediately
\begin{equation}\label{2.11}
  \tilde{\ell}_{0}(x)=\frac{\ell(x)}{c+1}=\frac{ae^{-bx}}{(c+1)x^{c+1}},\quad x>0.
\end{equation}
For (\ref{2.10}) we note that
\begin{equation*}
  \frac{b(b+u)^{c}-b^{c+1}}{u}=cb^{c}+O(u)
\end{equation*}
near the origin. According to [Bateman, 1954, $\S$5.4.3] \cite{B2}, the following inverse transform has been established,
\begin{equation*}
  \frac{1}{2\pi\ii}\int^{d+\ii\infty}_{d-\ii\infty}\frac{e^{ux}(b+u)^{c}}{u}\dd u=b^{c}\bigg(1-\frac{\Gf(-c,bx)}{\Gf(-c)}\bigg),\quad d>0.
\end{equation*}
Since, by Fubini's theorem, $\int^{\infty}_{0+}\Gf(-c,bx)/\Gf(c)\dd x=-c/b$, we have indeed
\begin{equation}\label{2.12}
  \tilde{\ell}_{1}(x)=-\frac{ab^{c+1}\Gf(-c,bx)}{c+1},\quad x>0.
\end{equation}
Substituting (\ref{2.11}) and (\ref{2.12}) into (\ref{2.8}) we obtain $\tilde{\ell}$. Also, note that integration by parts gives
\begin{equation*}
  \Gf(-c,bx):=\int^{\infty}_{bx}z^{-c-1}e^{-z}\dd z=(bx)^{-c-1}e^{-bx}-(c+1)\int^{\infty}_{bx}z^{-c-2}e^{-z}\dd z,
\end{equation*}
where the last integral is clearly positive, and thus shows that $\ell$ is indeed a positive function. Therefore, the expression for $\tilde{\alpha}$ can be easily deduced via straightforward calculations using (\ref{2.7}), Fubini's theorem, and the recurrence relation $c\Gf(c)=\Gf(c+1)$.

In the same vein, it is easily verifiable that
\begin{equation*}
  \int^{\infty}_{0+}x\tilde{\ell}(x)\dd x=\frac{a\Gf(1-c)}{2b^{1-c}}=\E\tilde{X}_{1}
\end{equation*}
and the assumption (\ref{2.6}) is well satisfied, from which we complete the proof by the uniqueness of the L\'{e}vy triplet.
\end{proof}

From Corollary \ref{cr:1} we see that $\Lambda$, which we name as the average-tempered stable subordinator corresponding to $\tilde{X}$, is a purely discontinuous process. Since $\Gf(-c,bx)=O(x^{-c})$ as $x\searrow0$, the Blumenthal-Getoor index of $\Lambda$ is found to be
\begin{equation*}
  \mathbf{B}(\Lambda)=\inf\bigg\{p>0:\int_{|x|<1}|x|^{p}\tilde{\ell}(x)\dd x<\infty\bigg\}=c=\mathbf{B}(X),
\end{equation*}
which implies that $\Lambda$ is indeed a subordinator with infinite activity, as is $X$. Comparing (\ref{2.5}) with (\ref{1.2}) we see that, under the same parameter values, $X$ tends to have larger jumps relative to $\Lambda$, since $c>0$ and $\Gf(-c,bx)>0$, for any $x>0$, which is ascribed to the effect of averaging.

Applying L\'{e}vy's continuity theorem to (\ref{2.1}) shows that the sample paths of $(\tilde{X}_{t})$ grow like $O(t)$ $\PP$-\text{a.s.} over time, with the strong convergence
\begin{equation*}
  \frac{\tilde{X}_{t}}{t}\overset{\text{a.s.}}{\rightarrow}\frac{a\Gf(1-c)}{2b^{1-c}}>0,\quad\text{as }t\rightarrow\infty,
\end{equation*}
which is also viewable as a consequence of Kolmogorov's strong law of large numbers. On the other hand, noting that
\begin{equation*}
  \lim_{t\searrow0}\log\bar{f}_{\tilde{X}_{t}}(t^{-1/c}u)=-au^{c}\Gf(-c-1),\quad\Re u>0,
\end{equation*}
we have the weak convergence
\begin{equation*}
  \frac{\tilde{X}_{t}}{t^{1/c}}\overset{\text{d.}}{\rightarrow}\mathrm{S}\big(c,(a\Gf(-c-1))^{1/c}\big),\quad\text{as }t\searrow0,
\end{equation*}
where $\mathrm{S}\big(c,(a\Gf(-c-1))^{1/c}\big)$ denotes the one-sided stable distribution with stability parameter $c\in(0,1)$ and scale parameter $(a\Gf(-c-1))^{1/c}>0$. In comparison, as already derived in [Rosi\'{n}ski, 2007, $\S$3] \cite{R1}, we have $X_{t}/t^{1/c}\overset{\text{d.}}{\rightarrow}\mathrm{S}\big(c,(-a\Gf(-c))^{1/c}\big)$. Therefore, we say that the $\mathrm{ATS}(a,b;c)$ distribution tempers a $c$-stable distribution as the $\mathrm{TS}(a,b;c)$ does, but with smaller scales, since $-\Gf(-c)/\Gf(-c-1)=c+1>1$. Furthermore, in the light of Corollary \ref{cr:1}, it can be regarded as a generalized one-sided tempered $c$-stable distribution with a non-monotonic tempering function\footnote{In contrast, we recall that the tempering function associated with the original tempered stable subordinator $X$ is simply $Q(x)=e^{-bx}$, for $x>0$. See, e.g., [K\"{u}chler and Tappe, 2013, $\S$2] \cite{KT}.} given by
\begin{equation*}
  \tilde{Q}(x)=\frac{1}{c+1}\bigg(e^{-bx}-(bx)^{c+1}\Gf(-c,bx)\bigg),\quad x>0,
\end{equation*}
which is positive-valued and builds the connection $\tilde{\nu}(\dd x)=\tilde{Q}(x)\nu_{\rm S}(\dd x)$ for $x>0$, where $\nu_{\rm S}(\dd x)=(a/x^{c+1})\dd x$ is the jump intensity measure of a $c$-stable subordinator. Asymptotic distributions of the $\mathrm{ATS}(a,b;c)$ distribution based on its parameters are summarized in Corollary \ref{cr:2}.

\begin{corollary}[\textbf{Asymptotic distributions}]\label{cr:2}
We have the following weak convergence relations for $\tilde{X}_{1}\overset{\rm d.}{=}\mathrm{ATS}(a,b;c)$.
\begin{align*}
  \mathrm{ATS}(a,b;c)&\rightarrow\delta(0),\quad\text{as }a\searrow0,\\
  \mathrm{ATS}(a,b;c)&\rightarrow\mathrm{S}\big(c,(a\Gf(-c-1))^{1/c}\big),\quad\text{as }b\searrow0,\\
  \mathrm{ATS}(a,b;c)&\rightarrow\mathrm{AG}(a,b),\quad\text{as }c\searrow0,
\end{align*}
where $\mathrm{AG}(a,b)$ denotes the average-gamma distribution with shape parameter $a>0$ and rate parameter $b>0$. Details are given in the proof.
\end{corollary}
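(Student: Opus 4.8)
The plan is to invoke L\'{e}vy's continuity theorem in its Laplace-transform guise: a net of probability laws on $\mathds{R}_{++}$ converges weakly to a probability law as soon as the associated Laplace transforms converge pointwise on $(0,\infty)$ to a function that is itself the Laplace transform of a probability law on $\mathds{R}_{++}$ (equivalently, right-continuous at $u=0$). Since each of the three candidate limits below is a genuine Laplace transform of an $\mathds{R}_{++}$-valued distribution, it suffices to pass to the limit in the closed form (\ref{2.1}) of $\bar{f}_{\tilde{X}_{1}}(u|a,b;c)$ and identify the outcome.

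The two cases $a\searrow0$ and $b\searrow0$ are immediate. In (\ref{2.1}) the exponent is proportional to $a$, so it vanishes as $a\searrow0$ and $\bar{f}_{\tilde{X}_{1}}(u)\to1$, which is the Laplace transform of $\delta(0)$. For $b\searrow0$, since $c>0$ we have $b^{c}(b+(c+1)u)\to0$ and $(b+u)^{c+1}\to u^{c+1}$, whence the exponent converges to $a\Gf(-c)u^{c}/(c+1)$; the recurrence $\Gf(-c)=(-c-1)\Gf(-c-1)=-(c+1)\Gf(-c-1)$ rewrites this as $-a\Gf(-c-1)u^{c}$, and since $\Gf(-c-1)>0$ for $c\in(0,1)$ this is precisely the Laplace exponent of $\mathrm{S}\big(c,(a\Gf(-c-1))^{1/c}\big)$ under the scale convention already used for the small-time limit of $\tilde{X}$.

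The case $c\searrow0$ is the one I expect to be the main obstacle, because it is an indeterminate $\infty\cdot0$ form: $\Gf(-c)=-1/c-\gamma+O(c)$ diverges while the numerator $(b+u)^{c+1}-b^{c}(b+(c+1)u)$ vanishes. I would expand to first order in $c$, using $(b+u)^{c+1}=(b+u)\big(1+c\log(b+u)+O(c^{2})\big)$ and $b^{c}(b+(c+1)u)=(b+u)+cu+c(b+u)\log b+O(c^{2})$, so that the numerator equals $c\big((b+u)\log\frac{b+u}{b}-u\big)+O(c^{2})$; multiplying by $\Gf(-c)$ and dividing by $(c+1)u$ then leaves the limiting Laplace transform $\exp\!\big(a(1-\tfrac{b+u}{u}\log(1+u/b))\big)$. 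To recognize this as the claimed average-gamma law $\mathrm{AG}(a,b)$ I would define $\mathrm{AG}(a,b)$ as the distribution of $\tilde{G}_{1}$, the running average at time $1$ of the gamma process $G\overset{\rm d.}{=}\mathrm{G}(a,b)$ with Laplace transform (\ref{1.3}), and then apply the integral representation underlying (\ref{2.2}), namely $\log\bar{f}_{\tilde{X}_{t}}(u)=t\int_{0}^{1}\log\bar{f}_{X_{1}}(ur)\,\dd r$, with $\log\bar{f}_{G_{1}}(u)=-a\log(1+u/b)$. The resulting elementary integral $\int_{0}^{1}\log(1+ur/b)\,\dd r=\tfrac{b+u}{u}\log(1+u/b)-1$ reproduces exactly $\bar{f}_{\mathrm{AG}(a,b)}(u)=\exp\!\big(a(1-\tfrac{b+u}{u}\log(1+u/b))\big)$, matching the $c\searrow0$ limit and completing the proof. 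The ``details'' promised in the statement are this closed form of $\bar{f}_{\mathrm{AG}(a,b)}$ and its derivation from the gamma process; as a complement one may also record the $c\searrow0$ limit of the L\'{e}vy density (\ref{2.5}), which by $\Gf(-c,bx)\to E_{1}(bx)$ and $b^{c+1}\to b$ becomes $a\big(e^{-bx}/x-bE_{1}(bx)\big)\mathds{1}_{(0,\infty)}(x)$, the L\'{e}vy density of $\tilde{G}_{1}$, though this is not needed for the weak-convergence claim.
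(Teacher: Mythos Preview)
Your proposal is correct and follows essentially the same route as the paper: both invoke L\'{e}vy's continuity theorem and compute the three pointwise limits of the Laplace transform (\ref{2.1}), with the $c\searrow0$ case identified as the $\mathrm{AG}(a,b)$ law either by direct expansion or via the integral relation (\ref{2.2}) applied to (\ref{1.3}). Your limiting form $\exp\!\big(a(1-\tfrac{b+u}{u}\log(1+u/b))\big)$ coincides with the paper's (\ref{2.13}), $e^{a}(1+u/b)^{-a(1+b/u)}$, after exponentiating; the only difference is that you spell out the first-order expansion in $c$ and append an (unneeded but correct) remark on the limiting L\'{e}vy density.
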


\begin{proof}
These are easy consequences from the Laplace transform (\ref{2.1}) and L\'{e}vy's continuity theorem. To be precise, we have
\begin{equation*}
  \lim_{a\searrow0}\log\bar{f}_{\tilde{X}_{1}}(u)=0,\quad\Re u>0,
\end{equation*}
which signifies a degenerate distribution at the origin, and also,
\begin{equation*}
  \lim_{b\searrow0}\log\bar{f}_{\tilde{X}_{1}}(u)=-au^{c}\Gf(-c-1),\quad\Re u>0.
\end{equation*}
The Laplace transform for the $\mathrm{AG}(a,b)$ distribution is given by
\begin{equation}\label{2.13}
  \lim_{c\searrow0}\bar{f}_{\tilde{X}_{1}}(u)=e^{a}\bigg(1+\frac{u}{b}\bigg)^{-a(1+b/u)},\quad\Re u>0,
\end{equation}
which can also be deduced via applying the integral relation (\ref{2.2}) to that of the $\mathrm{G}(a,b)$ distribution in (\ref{1.3}).
\end{proof}

As a remark, (\ref{2.13}) characterizes the distribution of the gamma functional $\int^{1}_{0}G_{s}\dd s$. Another special case is by taking $c=1/2$ in (\ref{2.1}) and we obtain the Laplace transform of the inverse Gaussian functional $\int^{1}_{0}I_{s}\dd s$,
\begin{equation*}
  \bar{f}_{\tilde{X}_{1}}\bigg(u\bigg|c=\frac{1}{2}\bigg)=\exp\frac{4\sqrt{\pi}a(\sqrt{b}(b+3u/2)-(b+u)^{3/2})}{3u},\quad\Re u>0,
\end{equation*}
which corresponds to the average-inverse Gaussian distribution parameterized by $\{a,b\}$, denoted $\mathrm{AIG}(a,b)$.

Now let $f_{\tilde{X}_{t}}(x)\equiv f_{\tilde{X}_{t}}(x|a,b;c):=\PP\{\tilde{X}_{t}\in\dd x\}/\dd x$ be the probability density function of $\tilde{X}_{t}$ for fixed $t>0$. Its connection with the Laplace transform derived in Theorem \ref{th:1} is well understood, namely
\begin{equation*}
  \bar{f}_{\tilde{X}_{t}}(u)=\int^{\infty}_{0}e^{-ux}f_{\tilde{X}_{t}}(x)\dd x,\quad u\in\mathds{C}\setminus(-\infty,-b],
\end{equation*}
and an integral representation is provided in the next theorem.

\begin{theorem}[\textbf{Probability density function}]\label{th:2}
\begin{align}\label{2.14}
  f_{\tilde{X}_{t}}(x)&=\frac{be^{-atb^{c}\Gf(-c)}}{\pi}\int^{1}_{0}\frac{\dd y}{y^{2}}\;\exp\Bigg(-atb^{c}\Gf(-c-1)\Bigg(y+\cos(\pi c)(1-y)\bigg(\frac{1}{y}-1\bigg)^{c}\Bigg)-\frac{bx}{y}\Bigg)\\
  &\qquad\times\sin\Bigg(atb^{c}\sin(\pi c)\Gf(-c-1)(1-y)\bigg(\frac{1}{y}-1\bigg)^{c}\Bigg),\quad x>0,\;t>0. \nonumber
\end{align}
\end{theorem}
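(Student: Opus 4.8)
The plan is to obtain $f_{\tilde{X}_{t}}$ by Laplace inversion of (\ref{2.1}) and then to collapse the Bromwich contour onto the branch cut $(-\infty,-b]$ identified in Theorem \ref{th:1}. Concretely, I would fix some $d\in(-b,0)$ and start from $f_{\tilde{X}_{t}}(x)=\frac{1}{2\pi\ii}\int_{d-\ii\infty}^{d+\ii\infty}e^{ux}\bar{f}_{\tilde{X}_{t}}(u)\,\dd u$. This representation is legitimate because, for $u=d+\ii v$ with $|v|\to\infty$, the exponent in (\ref{2.1}) is asymptotically $\frac{at\Gf(-c)}{c+1}(b+u)^{c}$, whose real part tends to $-\infty$ at the rate $-\,\text{const}\cdot|v|^{c}$ (here $\Gf(-c)<0$ and $\cos(\pi c/2)>0$ since $c\in(0,1)$); hence $|\bar{f}_{\tilde{X}_{t}}|$ decays along the vertical line like $\exp(-\kappa|v|^{c})$ for some $\kappa>0$, so the integral converges absolutely and represents the continuous density of $\tilde{X}_{t}$.

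Next I would deform the contour. The integrand $e^{ux}\bar{f}_{\tilde{X}_{t}}(u)$ is analytic on $\mathds{C}\setminus(-\infty,-b]$ (the singularity at the origin being removable, per Theorem \ref{th:1}), and on large circular arcs lying in $\{\Re u<d\}$ the factor $e^{ux}$ with $x>0$ decays exponentially while the exponent grows at most like $|u|^{c}$; thus the Bromwich line may be pushed onto a Hankel-type contour that enters from $-\infty$ along the lower edge of the cut, encircles the branch point $-b$, and returns to $-\infty$ along the upper edge, the closing arcs and the small circle about $-b$ (where $(b+u)^{c+1}\to0$ keeps the integrand bounded) contributing nothing. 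I would then parametrize the two edges by $u=-b/y$, $y\in(0,1)$, so that $b+u=-b(1-y)/y$, $e^{ux}=e^{-bx/y}$, $\dd u=(b/y^{2})\dd y$, and $(b+u)^{c+1}=(b(1-y)/y)^{c+1}e^{\pm\ii\pi(c+1)}$ on the upper and lower edge respectively.

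Substituting this into the exponent of (\ref{2.1}) and using $e^{\pm\ii\pi(c+1)}=-(\cos(\pi c)\pm\ii\sin(\pi c))$ together with the recurrence $\Gf(-c)=(-c-1)\Gf(-c-1)$, so that $(c+1)\Gf(-c-1)=-\Gf(-c)$, one finds that $\log\bar{f}_{\tilde{X}_{t}}(u)$ on the two edges equals $-\,atb^{c}\Gf(-c)-B(y)\mp\ii\,C(y)$ (upper/lower respectively), where $B(y)=atb^{c}\Gf(-c-1)\big(y+\cos(\pi c)(1-y)(1/y-1)^{c}\big)$ and $C(y)=atb^{c}\Gf(-c-1)\sin(\pi c)(1-y)(1/y-1)^{c}$ are real. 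The difference of the lower- and upper-edge contributions then produces the factor $e^{\ii C(y)}-e^{-\ii C(y)}=2\ii\sin(C(y))$, and combining this with $\dd u=(b/y^{2})\dd y$, the common factor $e^{-atb^{c}\Gf(-c)}$, and the prefactor $1/(2\pi\ii)$ yields exactly (\ref{2.14}); a final check on the behavior of $(1/y-1)^{c}$ and $e^{-bx/y}$ near $y=0$ and $y=1$ shows the resulting integral converges absolutely for $x>0$, $t>0$. The hard part will be making the contour deformation rigorous — establishing the uniform decay of $e^{ux}\bar{f}_{\tilde{X}_{t}}(u)$ on the large left-half-plane arcs and the vanishing of the arc around $-b$, while tracking the principal branch of $(b+u)^{c+1}$ consistently throughout; the remaining algebra, though it requires care with the signs and the Gamma-function identities, is routine.
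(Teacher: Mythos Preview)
Your proposal is correct and follows essentially the same approach as the paper: Laplace inversion via the Bromwich integral, deformation of the contour onto the two edges of the cut $(-\infty,-b]$, extraction of the imaginary part, and the substitution $u=-b/y$. The only cosmetic differences are that the paper takes $d>0$ rather than $d\in(-b,0)$, uses a rectangular rather than circular (Hankel) contour for the deformation, and first records the intermediate form (\ref{2.16}) over $u\in(b,\infty)$ before substituting $y=b/u$; your decay estimate $|\bar{f}_{\tilde{X}_{t}}(d+\ii v)|\lesssim\exp(-\kappa|v|^{c})$ is in fact more explicit than what the paper states.
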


\begin{proof}
By the inversion formula
\begin{equation*}
  f_{\tilde{X}_{t}}(x)=\frac{1}{2\pi\ii}\int^{d+\ii\infty}_{d-\ii\infty}e^{ux}\bar{f}_{\tilde{X}_{t}}(u)\dd u,
\end{equation*}
for arbitrary $d>0$. Using Theorem \ref{th:1} we choose by convention a closed rectangular contour oriented counterclockwise that goes along the edges of the interval $(-\infty,-b]$. More specifically, the contour consists of eight pieces,
\begin{equation*}
  C_{R,r}=L_{d,R}\cup L_{-R,R}\cup L_{-R,r}\cup L_{-b,r}\cup L_{\circ}\cup L_{-R,-r}\cup L_{-R,-R}\cup L_{d,-R},
\end{equation*}
with $0<r<R$ and where
\begin{align*}
  &L_{d,R}=\{d+\ii u:u\in[-R,R]\},\quad L_{-R,R}=\{-u+\ii R:u\in[-d,R]\},\\
  &L_{-R,r}=\{-R-\ii u:u\in[-R,-r]\},\quad L_{-b,r}=\{u+\ii r:u\in[-R,-b]\},\\
  &L_{\circ}=\bigg\{-b+re^{-\ii\theta}:\theta\in\bigg[-\frac{\pi}{2},\frac{\pi}{2}\bigg]\bigg\},\quad L_{-R,-r}=\{-u-\ii r:u\in[b,R]\},\\
  &L_{-R,-R}=\{-R-\ii u:u\in[0,R]\},\quad L_{d,-R}=\{u-\ii R:u\in[-R,d]\}.
\end{align*}
Since $\big|\bar{f}_{\tilde{X}_{t}}(u)\big|$ vanishes in the limit as $u\rightarrow\infty$, sending $R\rightarrow\infty$ and $r\searrow0$ one can show that the integrals along $L_{-R,R}$, $L_{-R,r}$, $L_{\circ}$, $L_{-R,-R}$ and $L_{d,-R}$ all vanish. Consequently, by Cauchy's integral theorem $\int_{C_{R,r}}=0$ and we obtain
\begin{equation*}
  f_{\tilde{X}_{t}}(x)=-\frac{1}{2\pi\ii}\lim_{\substack{R\rightarrow\infty,\\r\searrow0}}\bigg(\int_{L_{-b,r}}+\int_{L_{-R,-r}}\bigg) e^{ux}\bar{f}_{\tilde{X}_{t}}(u)\dd u
\end{equation*}
and after parameterizing the two lines and plugging in (\ref{2.1})
\begin{equation}\label{2.15}
  f_{\tilde{X}_{t}}(x)=\frac{1}{\pi}\int^{\infty}_{b}e^{-ux}\big(-\Im\bar{f}_{\tilde{X}_{t}}(-u)\big)\dd u,
\end{equation}
in which
\begin{equation*}
  \Im\bar{f}_{\tilde{X}_{t}}(-u)=e^{\Re\log\bar{f}_{\tilde{X}_{t}}(-u)}\sin\Im\log\bar{f}_{\tilde{X}_{t}}(-u),\quad u>b.
\end{equation*}
Using that $(b-u)^{c+1}=-(u-b)^{c+1}e^{\ii\pi c}$, (\ref{2.15}) gives
\begin{align}\label{2.16}
  f_{\tilde{X}_{t}}(x)&=\frac{e^{-atb^{c}\Gf(-c)}}{\pi}\int^{\infty}_{b}\dd u\;\exp\bigg(\frac{at\Gf(-c)(b^{c+1}+\cos(\pi c)(u-b)^{c+1})}{(c+1)u}-ux\bigg)\\
  &\qquad\times\sin\frac{-at\sin(\pi c)\Gf(-c)(u-b)^{c+1}}{(c+1)u}. \nonumber
\end{align}
The proof is completed by applying the substitution $y=b/u$ to (\ref{2.16}) and using again the recurrence relation of $\Gf(\cdot)$, thus leading to a proper definite integral.
\end{proof}

By the observation that
\begin{equation*}
  \frac{1}{y^{2}}\exp\Bigg(-atb^{c}\Gf(-c-1)(1-y)\bigg(\frac{1}{y}-1\bigg)^{c}-\frac{bx}{y}\Bigg)\leq\frac{e^{1-bx/y}}{y^{2}}, \quad y\in(0,1),
\end{equation*}
we have that the integrand in the representation (\ref{2.14}) is bounded over the unit interval and hence the integral is well defined for every $x>0$, which verifies that the distribution of $\tilde{X}_{t}$ for $t>0$ implied by Theorem 2 is absolutely continuous with respect to the Lebesgue measure. In the following corollary further comments are given on the structural properties of the probability density function along this aspect.

\begin{corollary}[\textbf{Smoothness and unimodality}]\label{cr:3}
The probability density function in (\ref{2.14}) is infinitely smooth and unimodal on $\mathds{R}_{++}$.
\end{corollary}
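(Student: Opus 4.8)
The plan is to establish the two properties by separate classical devices: infinite differentiability by differentiating under the integral sign in (\ref{2.14}), and unimodality by recognising that the distribution of $\tilde{X}_{t}$ --- equivalently that of $\Lambda_{t}$ --- is self-decomposable and then invoking Yamazato's theorem. For the smoothness, note that in (\ref{2.14}) the variable $x$ enters only through the factor $e^{-bx/y}$, so differentiating $k$ times under the integral multiplies the integrand by $(-b/y)^{k}$. To legitimize this on an arbitrary compact $[\epsilon,M]\subset\mathds{R}_{++}$, I would bound the $k$-th differentiated integrand, uniformly in $x\in[\epsilon,M]$ and using $|\sin(\cdot)|\le1$, by
\[
  \frac{b^{k}}{y^{k+2}}\exp\!\bigg(-atb^{c}\Gf(-c-1)\Big(y+\cos(\pi c)(1-y)\big(\tfrac1y-1\big)^{c}\Big)\bigg)e^{-b\epsilon/y}.
\]
When $c\le1/2$ the exponential prefactor is $\le1$; when $c>1/2$ it is at most $\exp\big(C(\tfrac1y-1)^{c}\big)$ for $C=atb^{c}\Gf(-c-1)|\cos(\pi c)|>0$, which grows only like $\exp(C\,y^{-c})$ near $y=0$ with $c<1$ and hence is eventually dominated by $e^{b\epsilon/(2y)}$. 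The resulting dominating function is $O(y^{-k-2}e^{-b\epsilon/(2y)})$ as $y\searrow0$, bounded near $y=1$, so it lies in $L^{1}(0,1)$ and does not depend on $x$; the standard differentiation-under-the-integral-sign theorem then yields $f_{\tilde{X}_{t}}\in C^{k}(\mathds{R}_{++})$ for every $k$. (Equivalently, one reads off from (\ref{2.1}) that $\xi\mapsto\bar{f}_{\tilde{X}_{t}}(-\ii\xi)$ decays like $e^{-\mathrm{const}\cdot|\xi|^{c}}$ as $|\xi|\to\infty$, so that $\xi^{k}\bar{f}_{\tilde{X}_{t}}(-\ii\xi)\in L^{1}(\mathds{R})$ for all $k$ and $C^{\infty}$-smoothness follows by Fourier inversion.)

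For the unimodality I would use the classical characterization by which an infinitely divisible law on $\mathds{R}$ carried by $[0,\infty)$ is self-decomposable as soon as its L\'evy measure $\tilde{\nu}(\dd x)=\tilde{\ell}(x)\dd x$ is such that $x\mapsto x\,\tilde{\ell}(x)$ is non-increasing on $(0,\infty)$. From the explicit density in (\ref{2.5}), $x\,\tilde{\ell}(x)=\tfrac{a}{c+1}\big(x^{-c}e^{-bx}-b^{c+1}x\,\Gf(-c,bx)\big)$, and differentiating --- using $\tfrac{\dd}{\dd x}\Gf(-c,bx)=-b^{-c}x^{-c-1}e^{-bx}$, whereupon the $x^{-c}e^{-bx}$-terms cancel --- gives
\[
  \frac{\dd}{\dd x}\big(x\,\tilde{\ell}(x)\big)=-\frac{a}{c+1}\Big(c\,x^{-c-1}e^{-bx}+b^{c+1}\Gf(-c,bx)\Big)<0,\quad x>0,
\]
since $c>0$ and $\Gf(-c,bx)>0$ for all $x>0$, as noted in the proof of Corollary \ref{cr:1}. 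Thus $x\,\tilde{\ell}(x)$ is strictly decreasing, $\tilde{X}_{t}$ is self-decomposable for every $t>0$, and by Yamazato's theorem (Yamazato, 1978) on the unimodality of self-decomposable laws its distribution --- absolutely continuous by the discussion preceding the corollary --- is unimodal on $\mathds{R}_{++}$, the density being non-decreasing up to a unique mode and non-increasing beyond it.

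The one substantive point is to recognise the self-decomposable structure and to verify the sign of $(x\,\tilde{\ell}(x))'$; the smoothness is a routine dominated-convergence argument, the only mild care being the $L^{1}$-integrability of the dominating function at the endpoint $y=0$ when $c>1/2$, where the $\cos(\pi c)$-term in (\ref{2.14}) carries the unfavourable sign but is of order $y^{-c}$ with $c<1$ and hence is absorbed by the decay $e^{-b\epsilon/y}$.
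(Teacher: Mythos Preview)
Your proposal is correct and follows essentially the same route as the paper: unimodality is obtained by computing $\frac{\dd}{\dd x}(x\tilde{\ell}(x))=-\frac{a}{c+1}\big(cx^{-c-1}e^{-bx}+b^{c+1}\Gf(-c,bx)\big)<0$ to place the law in class $L$ and then invoking Yamazato's theorem, exactly as in the paper. For smoothness the paper contents itself with a one-line appeal to smoothness of the integrand in (\ref{2.14}), whereas your dominated-convergence bound (and the alternative via the $e^{-\mathrm{const}\cdot|\xi|^{c}}$ decay of the characteristic function) supplies the missing justification more carefully.
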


\begin{proof}
The infinite smoothness of the probability density function follows from that of the integrand inside (\ref{2.14}), which is equivalent to (\ref{2.16}). As for unimodality, we recall the representation (\ref{2.4}) of the Laplace exponent. Since $(e^{-ux}-1+ux/(1+x^{2}))/x=x/2+O(x)$ as $x\searrow0$ for $\Re u>0$, the Laplace exponent of $\tilde{X}_{1}$ admits that
\begin{equation*}
  \log\bar{f}_{\tilde{X}_{1}}(u)=-\tilde{\eta}u+\int^{\infty}_{0+}\bigg(e^{-ux}-1+\frac{ux}{1+x^{2}}\bigg)\tilde{\ell}(x)\dd x,\quad\Re u>0,
\end{equation*}
where
\begin{equation*}
  \tilde{\eta}=\int^{\infty}_{0+}\frac{x\tilde{\ell}(x)}{1+x^{2}}\dd x<\infty.
\end{equation*}
From Corollary \ref{cr:1}, we already know that $x\tilde{\ell}(x)>0$ for $x>0$, $\int^{1}_{0+}x^{2}\tilde{\ell}(x)\dd x<\infty$, and also $\int^{\infty}_{1}\tilde{\ell}(x)\dd x<\infty$, and then observe that
\begin{equation*}
  \frac{\dd(x\tilde{\ell}(x))}{\dd x}=-\frac{acx^{-c-1}e^{-bx}+ab^{c+1}\Gf(-c,bx)}{c+1}<0,\quad x>0,
\end{equation*}
so that $x\tilde{\ell}(x)$ is strictly decreasing in $x>0$. Therefore, we claim that the $\mathrm{ATS}(a,b;c)$ distribution belongs to class L in the sense of [Yamazato, 1978, $\S$1] \cite{Y1}, where it was also shown that all class-L distributions must be unimodal.
\end{proof}

Efficient implementation of the integral representation (\ref{2.14}) can be carried out by employing the Gauss quadrature rule proposed in [Golub and Welsch, 1969] \cite{GW} and its Kronrod extension discussed in [Laurie, 1997] \cite{L1}. Indeed, it is easy to notice that, despite the highly oscillatory feature of the sine function near the origin, the integrand tends to 0 exponentially fast as $y\searrow0$, and it attains a zero of the sine function as $y\nearrow1$, uniformly in $x$. On the other hand, we find it quite knotty to obtain a single series representation of the probability density function, due to the general exponential form of $\bar{f}_{\tilde{X}_{t}}(u)$. Also, according to Corollary \ref{cr:3}, the mode of $\tilde{X}_{t}$ is uniquely identified as
\begin{align*}
  &\Bigg\{x>0:\int^{1}_{0}\frac{\dd y}{y^{3}}\;\exp\Bigg(-atb^{c}\Gf(-c-1)\Bigg(y+\cos(\pi c)(1-y)\bigg(\frac{1}{y}-1\bigg)^{c}\Bigg)-\frac{bx}{y}\Bigg)\\
  &\qquad\times\sin\Bigg(atb^{c}\sin(\pi c)\Gf(-c-1)(1-y)\bigg(\frac{1}{y}-1\bigg)^{c}\Bigg)=0\Bigg\},\quad t>0,
\end{align*}
which can be solved numerically by minimizing the magnitude of the integral over the domain of $x$.

Moreover, it is possible to recover the L\'{e}vy density of $\Lambda$ stated in Corollary \ref{cr:1} making use of the probability density function, preferably (\ref{2.16}). This heavily relies on the asymptotic relation that
\begin{equation*}
  \lim_{t\searrow0}\frac{\E\varphi(\tilde{X}_{t})}{t}\equiv\lim_{t\searrow0}\frac{1}{t}\int^{\infty}_{0}\varphi(x)f_{\tilde{X}_{t}}(x)\dd x=\int^{\infty}_{0}\varphi(x)\tilde{\ell}(x)\dd x,
\end{equation*}
for any bounded and sufficiently smooth test function $\varphi:\mathds{R}_{++}\mapsto\mathds{R}$ in the presence of the distributional equivalence (\ref{2.3}), which implies that
\begin{equation*}
  \tilde{\ell}(x)=\lim_{t\searrow0}\frac{f_{\tilde{X}_{t}}(x)}{t}
\end{equation*}
by the absolute continuity of the distribution. The limit can be evaluated easily after expanding the integrand of (\ref{2.16}) around $t=0$, which then yields upon integration (\ref{2.5}).

Likewise, we have an integral formula for the cumulative distribution function,
\begin{equation*}
  F_{\tilde{X}_{t}}(x)\equiv\PP\{\tilde{X}_{t}\leq x\}=\int^{x}_{0}f_{\tilde{X}_{t}}(z)\dd z,\quad x>0,\;t>0.
\end{equation*}

\begin{theorem}[\textbf{Cumulative distribution function}]\label{th:3}
\begin{align}\label{2.17}
  F_{\tilde{X}_{t}}(x)&=1-\frac{e^{-atb^{c}\Gf(-c)}}{\pi}\int^{1}_{0}\frac{\dd y}{y}\;\exp\Bigg(-atb^{c}\Gf(-c-1)\Bigg(y+\cos(\pi c)(1-y)\bigg(\frac{1}{y}-1\bigg)^{c}\Bigg)-\frac{bx}{y}\Bigg)\\
  &\qquad\times\sin\Bigg(atb^{c}\sin(\pi c)\Gf(-c-1)(1-y)\bigg(\frac{1}{y}-1\bigg)^{c}\Bigg),\quad x>0,\;t>0. \nonumber
\end{align}
\end{theorem}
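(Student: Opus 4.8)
The plan is to read (\ref{2.17}) off from the density representation of Theorem \ref{th:2}, but by integrating the \emph{complementary} tail of $\tilde{X}_{t}$ rather than the interval $(0,x)$. Working with the pre-substitution form (\ref{2.16}) and recalling that $f_{\tilde{X}_{t}}$ is a genuine probability density (Theorem \ref{th:2}, together with the absolute continuity noted just after it), I would begin from $1-F_{\tilde{X}_{t}}(x)=\PP\{\tilde{X}_{t}>x\}=\int_{x}^{\infty}f_{\tilde{X}_{t}}(z)\,\dd z$, substitute (\ref{2.16}) for $f_{\tilde{X}_{t}}(z)$, and interchange the $z$- and $u$-integrations by Fubini's theorem. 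The inner integral is then elementary, $\int_{x}^{\infty}e^{-uz}\,\dd z=e^{-ux}/u$, and this is exactly where the extra factor $1/u$, which is absent from (\ref{2.16}), emerges: one is left with $1-F_{\tilde{X}_{t}}(x)$ equal to $e^{-atb^{c}\Gf(-c)}/\pi$ times the integral over $u\in(b,\infty)$ of $e^{-ux}/u$ multiplied by the very same oscillatory kernel that figures in (\ref{2.16}).

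It then remains only to perform the change of variables $y=b/u$, exactly as at the end of the proof of Theorem \ref{th:2}. This sends $\dd u/u$ to $-\dd y/y$ and the endpoints $u:b\to\infty$ to $y:1\to0$, which supplies the weight $\dd y/y$ of (\ref{2.17}); and using $(u-b)^{c+1}/u=b^{c}(1-y)(1/y-1)^{c}$, $b^{c+1}/u=b^{c}y$, $e^{-ux}=e^{-bx/y}$, together with the recurrence $c\Gf(c)=\Gf(c+1)$ in the form $\Gf(-c)/(c+1)=-\Gf(-c-1)$, the exponent and the argument of the sine turn into precisely those displayed in (\ref{2.17}). Rearranging then gives $F_{\tilde{X}_{t}}(x)=1-(\cdots)$, as claimed.

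The one step that genuinely needs care is the Fubini interchange --- equivalently, the reason one should integrate the upper tail and not instead write $F_{\tilde{X}_{t}}(x)=\int_{0}^{x}f_{\tilde{X}_{t}}(z)\,\dd z$ and split $\int_{b}^{\infty}(1-e^{-ux})u^{-1}(\cdots)\,\dd u$ into two separate integrals. For $c>1/2$ the exponent of the kernel in (\ref{2.16}) grows like a positive multiple of $u^{c}$ as $u\to\infty$, since then $\cos(\pi c)<0$ while $\Gf(-c)<0$; the two separated pieces would each diverge, whereas the relevant integral $\int_{b}^{\infty}e^{-ux}u^{-1}|\kappa(u)|\,\dd u$, with $|\kappa|$ the modulus of that kernel, is finite because the factor $e^{-ux}$ with $x>0$ beats any $\exp(C_{t}u^{c})$ growth of $|\kappa|$ ($c<1$). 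For $c\leq1/2$ the kernel is bounded on $(b,\infty)$ and there is nothing to check. As an independent consistency check one may instead repeat the contour argument of Theorem \ref{th:2} for the inversion formula $F_{\tilde{X}_{t}}(x)=\frac{1}{2\pi\ii}\int_{d-\ii\infty}^{d+\ii\infty}(e^{ux}/u)\bar{f}_{\tilde{X}_{t}}(u)\,\dd u$, $d>0$: the closed contour $C_{R,r}$ now additionally encloses the simple pole of the integrand at $u=0$, whose residue is $\bar{f}_{\tilde{X}_{t}}(0)=1$, and this residue is exactly the leading constant $1$ of (\ref{2.17}), while the two edges of the branch cut reproduce the integral term.
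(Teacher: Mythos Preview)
Your proposal is correct and follows essentially the same route as the paper: integrate the density representation of Theorem \ref{th:2} using Fubini's theorem, with the fact $\lim_{x\to\infty}F_{\tilde{X}_{t}}(x)=1$ dictating that one works with the upper tail $\int_{x}^{\infty}f_{\tilde{X}_{t}}(z)\,\dd z$. The paper's proof is a one-line reference to precisely these three ingredients; you have simply (and usefully) spelled out the Fubini justification, including the case distinction on $c$ governing the growth of the kernel, and added the contour-based consistency check with the residue at $u=0$ accounting for the leading constant $1$.
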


\begin{proof}
This is an immediate consequence from applying Theorem \ref{th:2}, Fubini's theorem, as well as the fact that $\lim_{x\rightarrow\infty}F_{\tilde{X}_{t}}(x)=1$.
\end{proof}

As special cases, the probability density function and cumulative distribution function of the $\mathrm{AG}(at,b)$ distribution can be immediately identified from Theorems \ref{th:2} and \ref{th:3} as follows,
\begin{equation}\label{2.18}
  f_{\tilde{X}_{t}}(x|c\searrow0)=\frac{be^{at}}{\pi}\int^{1}_{0}\dd y\;\bigg(\frac{1}{y}-1\bigg)^{-at(1-y)}\frac{\sin(\pi at(1-y))e^{-bx/y}}{y^{2}}
\end{equation}
and
\begin{equation*}
  F_{\tilde{X}_{t}}(x|c\searrow0)=1-\frac{e^{at}}{\pi}\int^{1}_{0}\dd y\;\bigg(\frac{1}{y}-1\bigg)^{-at(1-y)}\frac{\sin(\pi at(1-y))e^{-bx/y}}{y},
\end{equation*}
while those of the $\mathrm{AIG}(at,b)$ distribution can be similarly written
\begin{equation*}
  f_{\tilde{X}_{t}}\bigg(x\bigg|c=\frac{1}{2}\bigg)=\frac{be^{2at\sqrt{\pi b}}}{\pi}\int^{1}_{0}\frac{\dd y}{y^{2}}\;\exp\bigg(-\frac{4at\sqrt{\pi b}y}{3}-\frac{bx}{y}\bigg)\sin\bigg(\frac{4at\sqrt{\pi b}(1-y)}{3}\sqrt{\frac{1}{y}-1}\bigg)
\end{equation*}
and
\begin{equation*}
  F_{\tilde{X}_{t}}\bigg(x\bigg|c=\frac{1}{2}\bigg)=1-\frac{e^{2at\sqrt{\pi b}}}{\pi}\int^{1}_{0}\frac{\dd y}{y}\;\exp\bigg(-\frac{4at\sqrt{\pi b}y}{3}-\frac{bx}{y}\bigg)\sin\bigg(\frac{4at\sqrt{\pi b}(1-y)}{3}\sqrt{\frac{1}{y}-1}\bigg),
\end{equation*}
for $x>0$ and $t>0$.

Some results are next established for the tail behaviors of the probability density function and the cumulative distribution function represented in Theorem \ref{th:2} and Theorem \ref{th:3}, aiming at making up their deficiency when their argument $x>0$ takes extreme values.

\begin{theorem}[\textbf{Tail behaviors of probability density function}]\label{th:4}
We have for $t>0$
\begin{equation}\label{2.19}
  f_{\tilde{X}_{t}}(x)=\frac{ate^{atb^{c}c\Gf(-c-1)-bx}}{bx^{c+2}}(1+O(x^{-c-1})),\quad\text{as }x\rightarrow\infty,
\end{equation}
and
\begin{align}\label{2.20}
  f_{\tilde{X}_{t}}(x)&=\frac{be^{-atb^{c}\Gf(-c)}}{\pi}\int^{1}_{0}\frac{\dd y}{y^{2}}\;\exp\Bigg(-atb^{c}\cos(\pi c)\Gf(-c-1)\bigg(\frac{1}{y}-1\bigg)^{c}-\frac{bx}{y}\Bigg)\\
  &\qquad\times\sin\Bigg(atb^{c}\sin(\pi c)\Gf(-c-1)\bigg(\frac{1}{y}-1\bigg)^{c}\Bigg)(1+O(x^{1-c})),\quad\text{as }x\searrow0. \nonumber
\end{align}
\end{theorem}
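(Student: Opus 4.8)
The plan is to work from the integral representation \eqref{2.14} and analyze the two regimes of $x$ separately. For the large-$x$ asymptotics \eqref{2.19}, I would first change variables in \eqref{2.16} back to the $u$-integral $f_{\tilde{X}_{t}}(x)=\frac{e^{-atb^{c}\Gf(-c)}}{\pi}\int^{\infty}_{b}e^{-ux}g(u)\dd u$, where $g(u)$ collects the slowly varying exponential and the sine factor. As $x\to\infty$ the Laplace-type integral is dominated by the behavior of $g$ near its left endpoint $u=b$. Near $u=b$ one has $(u-b)^{c+1}\to0$, so the exponential prefactor tends to $e^{atb^{c}c\Gf(-c)/(c+1)}=e^{atb^{c}c\Gf(-c-1)}$ (using $\Gf(-c)=(-c-1)\Gf(-c-1)$, i.e. the recurrence), while $\sin(\cdot)\sim -at\sin(\pi c)\Gf(-c)(u-b)^{c+1}/((c+1)b)$ to leading order. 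Substituting $v=x(u-b)$ and invoking Watson's lemma for the resulting $\int_0^\infty e^{-v}v^{c+1}(\cdots)\dd v$ produces a leading term of order $x^{-c-2}e^{-bx}$ with the Gamma-function constant $\Gf(c+2)\sin(\pi c)/\pi=1/\Gf(-c-1)$ by the reflection formula, which exactly cancels the stray $\Gf(-c-1)$ and yields the stated constant $at/b$. The correction term is $O(x^{-c-1})$ because the next term in the expansion of $g$ near $u=b$ carries an extra factor $(u-b)$ or $(u-b)^{c+1}$, whichever is smaller; for $c\in(0,1)$ the latter dominates, giving the claimed remainder.

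For the small-$x$ statement \eqref{2.20}, the claim is milder: it merely asserts that in \eqref{2.14} one may replace the argument $y+\cos(\pi c)(1-y)(1/y-1)^c$ of the first exponential by $\cos(\pi c)(1-y)(1/y-1)^c$ — i.e. drop the additive $y$ — at the cost of a multiplicative factor $1+O(x^{1-c})$, uniformly. Here I would factor out $e^{-atb^{c}\Gf(-c-1)y}$ from the integrand of \eqref{2.14} and write $e^{-atb^{c}\Gf(-c-1)y}=1-atb^{c}\Gf(-c-1)y+\cdots$; the point is then to show that the difference integral, namely \eqref{2.14} with the extra factor $(1-e^{-atb^{c}\Gf(-c-1)y})$ inserted, is $O(x^{1-c})$ relative to the displayed integral as $x\searrow0$. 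Since $1-e^{-atb^{c}\Gf(-c-1)y}=O(y)$ near $y=0$, the difference integrand gains an extra power of $y$, and a standard Laplace/Watson estimate at the endpoint $y\searrow0$ (where $e^{-bx/y}$ concentrates and $(1/y-1)^c\sim y^{-c}$) shows that an integral with integrand $\asymp y^{-2+c'}e^{-bx/y}\cdot(\text{bounded})$ behaves like $x^{1-c'}$ as $x\searrow0$; comparing $c'=1$ (difference) against $c'=0$ (main term, which blows up like $x^{-(1-c)}$ times the sine, i.e. like a fixed power) gives precisely the ratio $O(x^{1-c})$. One must also check that the sine factor, which is $O(y^{-c})$ near $y=0$, does not spoil this; it only shifts both exponents by the same amount, so the relative order is unchanged.

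The main obstacle I anticipate is making the small-$x$ estimate \eqref{2.20} genuinely uniform and pinning down the exact exponent $1-c$ in the remainder, rather than just "$o(1)$". This requires a careful endpoint analysis at $y\searrow0$ of both the displayed (leading) integral and the difference integral, tracking how the competing factors $y^{-2}$, $(1/y-1)^{c}\sim y^{-c}$ inside both the cosine-type exponential and the sine, and the Gaussian-like damping $e^{-bx/y}$, combine; the substitution $w=bx/y$ turns each into a Gamma-type integral whose asymptotics in $x$ are explicit, and the exponent $1-c$ emerges from the difference of the powers of $w$. By contrast the large-$x$ part is routine once Watson's lemma is set up; the only subtlety there is correctly identifying the constant via the reflection and recurrence formulas for $\Gf$, and confirming that the $O(x^{-c-1})$ (rather than $O(x^{-1})$) rate is the right one for $c<1$. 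I would present the large-$x$ case first, since it is cleaner, then the small-$x$ case, and close by noting that the analogous expansions for the $\mathrm{AG}$ and $\mathrm{AIG}$ special cases follow by setting $c\searrow0$ and $c=1/2$.
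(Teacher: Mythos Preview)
Your large-$x$ argument for \eqref{2.19} is essentially the paper's: the paper substitutes $y=u-b$ in \eqref{2.16}, Taylor-expands the exponential and sine factors at $y=0$, integrates $\int_0^\infty y^{c+1}e^{-xy}\,\dd y=\Gf(c+2)/x^{c+2}$, and closes with Euler's reflection formula. Your Watson-lemma formulation is the same computation in different clothing, and the remainder bookkeeping is correct.

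Your small-$x$ plan for \eqref{2.20}, however, has a genuine gap. First, a reading issue: passing from \eqref{2.14} to \eqref{2.20} does more than drop the additive $y$ in the exponential --- the factor $(1-y)$ multiplying $(1/y-1)^{c}$ is also replaced by $1$, in \emph{both} the exponential and the sine. More seriously, your proposed mechanism (``$e^{-bx/y}$ concentrates near $y\searrow0$, so run a Watson estimate there'') is backwards. As $x\searrow0$ the factor $e^{-bx/y}$ tends to $1$ pointwise in $y\in(0,1]$; there is no endpoint concentration at $y=0$, and the whole interval contributes. The substitution $w=bx/y$ you suggest would turn the integral into one over $(bx,\infty)$ with integrand carrying factors like $(w/(bx)-1)^{c}$, and extracting a clean power of $x$ from that --- uniformly, with the oscillatory sine in play and with $\cos(\pi c)$ changing sign at $c=1/2$ --- is not the routine Gamma-integral you sketch.

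The paper avoids this entirely by working in the Laplace domain: expand $\bar f_{\tilde X_t}(u)$ for $\Re u\to\infty$ as
\[
\bar f_{\tilde X_t}(u)=e^{-atb^{c}\Gf(-c)}\exp\!\big(-at\Gf(-c-1)(b+u)^{c}\big)\big(1+O(u^{c-1})\big),
\]
so that the leading small-$x$ behavior of $f_{\tilde X_t}$ is the Laplace inverse of $e^{-at\Gf(-c-1)(b+u)^{c}}$ up to $O(x^{1-c})$; that inverse has no closed form, but the \emph{same} contour deformation used for Theorem~\ref{th:2} converts it directly into the real integral displayed in \eqref{2.20}. This explains at once why the $(1-y)$ factors disappear and why the remainder exponent is exactly $1-c$. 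I would replace your direct-comparison argument with this Laplace-transform expansion; the endpoint analysis you describe is the tool for the large-$x$ side, not the small-$x$ side.
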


\begin{proof}
First we work with (\ref{2.16}), where the substitution $y=u-b$ leads to the alternative representation
\begin{align}\label{2.21}
  f_{\tilde{X}_{t}}(x)&=\frac{e^{-atb^{c}\Gf(-c)}}{\pi}\int^{\infty}_{0}\dd y\;\exp\bigg(\frac{at\Gf(-c)(b^{c+1}+\cos(\pi c)y^{c+1})}{(c+1)(b+y)}-(b+y)x\bigg)\\
  &\qquad\times\sin\frac{-at\Gf(-c)\sin(\pi c)y^{c+1}}{(c+1)(b+y)},\quad x>0. \nonumber
\end{align}
Since $c<1$, we take the Taylor expansion of the exponent and the sine part around $y=0$ and obtain respectively
\begin{equation}\label{2.22}
  \frac{at\Gf(-c)(b^{c+1}+\cos(\pi c)y^{c+1})}{(c+1)(b+y)}=-atb^{c}\Gf(-c-1)+O(y^{c})
\end{equation}
and
\begin{equation}\label{2.23}
  \sin\frac{-at\sin(\pi c)\Gf(-c)y^{c+1}}{(c+1)(b+y)}=-\frac{at\sin(\pi c)\Gf(-c)y^{c+1}}{b(c+1)}+O(y^{c+2}).
\end{equation}
Multiplying (\ref{2.22}) and (\ref{2.23}) together yields with (\ref{2.21})
\begin{align*}
  \int^{\infty}_{0}&=-\frac{at\sin(\pi c)\Gf(-c)e^{-atb^{c}\Gf(-c-1)-bx}}{b(c+1)}\int^{\infty}_{0}\dd y\;y^{c+1}e^{-xy}(1+O(y^{c+1}))\\
  &=-\frac{at\sin(\pi c)\Gf(-c)e^{-atb^{c}\Gf(-c-1)-bx}}{b(c+1)}\times\frac{\Gf(c+2)}{x^{c+2}}(1+O(x^{-c-1}))\\
  &=\frac{\pi ate^{-atb^{c}\Gf(-c-1)-bx}}{bx^{c+2}}(1+O(x^{-c-1})),
\end{align*}
where the third equality uses Euler's reflection formula for $\Gf(\cdot)$. Putting the last result into (\ref{2.21}) with minor simplifications leads to (\ref{2.19}).

For the left-tail behavior we take the asymptotic expansion of the Laplace transform (\ref{2.1})
\begin{align*}
  \bar{f}_{\tilde{X}_{t}}(u)&=\exp(-atb^{c}\Gf(-c)-(b+u)^{c}(at\Gf(-c-1)+O(u^{-1})))\\
  &=e^{-ab^{c}t\Gf(-c)}\exp(-at\Gf(-c-1)(b+u)^{c})(1+O(u^{c-1})),\quad\text{as }\Re u\rightarrow\infty,
\end{align*}
so that the asymptotic behavior of the density function near the origin is given by the Laplace inverse
\begin{equation}\label{2.24}
  f_{\tilde{X}_{t}}(x)=\frac{e^{-ab^{c}t\Gf(-c)}}{2\pi\ii}\int^{d+\ii\infty}_{d-\ii\infty}\dd u\;e^{ux-at\Gf(-c-1)(b+u)^{c}}(1+O(x^{1-c})),\quad d>0,
\end{equation}
where note that $at\Gf(-c-1)>0$. Unfortunately, the inverse (\ref{2.24}) cannot be evaluated explicitly for general $c\in(0,1)$. Towards this end we apply exactly the same contour deformation argument as in the proof of Theorem \ref{th:2}, which results in the real integral
\begin{align*}
  \frac{1}{2\pi\ii}\int^{d+\ii\infty}_{d-\ii\infty}\dd u\;e^{ux-at\Gf(-c-1)(b+u)^{c}}&=\frac{1}{\pi}\int^{\infty}_{b}\dd u\;e^{-ux-at\cos(\pi c)\Gf(-c-1)(u-b)^{c}}\\
  &\qquad\times\sin(at\sin(\pi c)\Gf(-c-1)(u-b)^{c}),
\end{align*}
and subsequently the substitution $y=b/u$ to obtain (\ref{2.20}).
\end{proof}

The asymptotic estimate in (\ref{2.19}) indicates that the right tail of $f_{\tilde{X}_{t}}(x)$ has power-adjusted exponential decay, which is heavier for smaller values of $c$ and is the same as the right tail of the tempered stable density function, $f_{X_{t}}(x)$, up to a positive scaling factor. The specialized estimates for the inverse Gaussian and gamma cases can be immediately obtained.
\begin{equation*}
  f_{\tilde{X}_{t}}\bigg(x\bigg|c=\frac{1}{2}\bigg)=\frac{ate^{2at\sqrt{\pi b}/3-bx}}{bx^{5/2}}(1+O(x^{-3/2})),\quad\text{as }x\rightarrow\infty,
\end{equation*}
and
\begin{equation*}
  f_{\tilde{X}_{t}}(x|c\searrow0)=\frac{ate^{at-bx}}{bx^{2}}(1+O(x^{-1})),\quad\text{as }x\rightarrow\infty.
\end{equation*}

On the basis of (\ref{2.24}), by the initial-value theorem, since $\lim_{u\rightarrow\infty}ue^{-at\Gf(-c-1)(b+u)^{c}}=0$, we know that the left tail of $f_{\tilde{X}_{t}}(x)$ vanishes eventually for all $c\in(0,1)$. On the other hand, noting that the Laplace inverse of $e^{-at\Gf(-c-1)(b+u)}$ in $u$ is a point mass, it can be implied that the closer $c$ is to 1 the flatter the left tail of $f_{\tilde{X}_{t}}(x)$ against $x$ becomes. In the inverse Gaussian case with $c=1/2$, (\ref{2.20}) actually gives an explicit estimate,
\begin{equation*}
  \bar{f}_{\tilde{X}_{t}}\bigg(u\bigg|c=\frac{1}{2}\bigg)=\frac{2at}{3x^{3/2}}\exp\bigg(2at\sqrt{\pi b}-bx-\frac{4\pi at^{2}}{9x}\bigg)(1+O(x^{1/2})),\quad\text{as }x\searrow0.
\end{equation*}
due to [Bateman, 1954, $\S$5.6.1] \cite{B2}. In the gamma case with $c\searrow0$, explicit estimates are also readily available, whereas the left tail does not have to vanish, which depends further on the factor $at>0$. We present the next corollary for this limiting case.

\begin{corollary}[\textbf{Left tail behavior of average-gamma probability density function}]\label{cr:4}
\begin{equation*}
  f_{\tilde{X}_{t}}(x|c\searrow0)=
  \begin{cases}
    \displaystyle \frac{(eb)^{at}x^{at-1}}{\Gf(at)}(1+O(x)),&\text{ if }at\neq1,\\
    \displaystyle eb(1+bx(\gamma-1+\log(bx))+O(x^{2})),&\text{ if }at=1,\\
  \end{cases}\quad\text{as }x\searrow0,
\end{equation*}
where $\gamma$ is the Euler-Mascheroni constant.
\end{corollary}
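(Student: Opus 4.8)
The plan is to read both cases off the average-gamma Laplace transform. By (\ref{2.13}), together with $\bar{f}_{\tilde{X}_{t}}=(\bar{f}_{\tilde{X}_{1}})^{t}$, the $\mathrm{AG}(at,b)$ transform is
\[
  \bar{f}_{\tilde{X}_{t}}(u|c\searrow0)=e^{at}\bigg(1+\frac{u}{b}\bigg)^{-at(1+b/u)}=(eb)^{at}u^{-at}\bigg(1+\frac{b}{u}\bigg)^{-at}\exp\bigg(-\frac{atb}{u}\log(1+u/b)\bigg),
\]
and the behaviour of $f_{\tilde{X}_{t}}(\cdot|c\searrow0)$ near the origin is governed by the behaviour of this transform as $\Re u\to\infty$. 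Inserting $\log(1+u/b)=\log(u/b)+b/u+O(u^{-2})$ in the last exponential and multiplying out gives
\[
  \bar{f}_{\tilde{X}_{t}}(u|c\searrow0)=(eb)^{at}u^{-at}\bigg(1-\frac{atb}{u}\big(1+\log(u/b)\big)+O\big(u^{-2}\log^{2}u\big)\bigg),\quad\Re u\to\infty,
\]
where the logarithm at order $u^{-at-1}$ is the signature of the singular limit $c\searrow0$. I would then invert term by term using $\tfrac{1}{2\pi\ii}\int_{d-\ii\infty}^{d+\ii\infty}e^{ux}u^{-s}\dd u=x^{s-1}/\Gf(s)$ and the companion identity, obtained by differentiating the former in $s$, $\tfrac{1}{2\pi\ii}\int_{d-\ii\infty}^{d+\ii\infty}e^{ux}u^{-s}\log u\,\dd u=x^{s-1}(\psi(s)-\log x)/\Gf(s)$ with $\psi=\Gf'/\Gf$; the exponents that occur are $s=at$ and $s=at+1$.

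If $at\neq1$, the leading $(eb)^{at}u^{-at}$ inverts to $(eb)^{at}x^{at-1}/\Gf(at)$ — a power singularity if $at<1$, a vanishing power if $at>1$ — which already dictates the dominant behaviour, while the $u^{-at-1}$ and $u^{-at-1}\log u$ terms invert to a multiple of $x^{at}$ (carrying a $\log x$), i.e.\ to a relative correction that tends to $0$ as $x\searrow0$; this is the first line. If $at=1$ the situation is borderline: $(eb)u^{-1}$ now inverts to the \emph{constant} $eb$, so the $x$-dependence first enters through $-(eb)b\,u^{-2}(1+\log(u/b))$, which on inversion — using $\psi(2)=1-\gamma$ and $\Gf(2)=1$ — yields the displayed second line, the $O(u^{-3}\log^{2}u)$ remainder contributing only at order $O(x^{2})$. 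Equivalently one may start from the average-gamma density (\ref{2.18}), written as $\tfrac{be^{at}}{\pi}\int_{0}^{1}h(y)e^{-bx/y}\dd y$ with $h(y)=(1/y-1)^{-at(1-y)}\sin(\pi at(1-y))/y^{2}$: since $h(y)=\sin(\pi at)\,y^{at-2}(1+O(y\log y))$ near $y=0$, the substitution $y=bx/v$ turns the leading part of the integral into $\sin(\pi at)(bx)^{at-1}\Gf(1-at,bx)$, and Euler's reflection $\Gf(at)\Gf(1-at)\sin(\pi at)=\pi$ recovers the $at\neq1$ leading term; at $at=1$ the prefactor $\sin(\pi at)$ vanishes, $h(0+)=\pi$, and the integral is dominated by $be\cdot bx\,\Gf(-1,bx)=be\big(bx(bx)^{-1}e^{-bx}-bx\,E_{1}(bx)\big)$, so the small-argument expansion $E_{1}(w)=-\gamma-\log w+w-\cdots$ is exactly what supplies the $\gamma$ and the $\log(bx)$.

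The main obstacle is rigor. In the Laplace route the asymptotic expansion of $\bar{f}_{\tilde{X}_{t}}(\cdot|c\searrow0)$ holds only for large $\Re u$, so the Bromwich integral cannot be split term by term without justification; I would handle this exactly as in the proof of Theorem \ref{th:2}, writing $\bar{f}_{\tilde{X}_{t}}(u|c\searrow0)$ as its finitely many explicit powers-and-powers-times-logs plus a remainder that is $O(u^{-at-2}\log^{2}u)$ uniformly on a rectangular contour hugging $(-\infty,-b]$, deforming the vertical line to that contour, and bounding the remainder's contribution, which after dividing by the leading power gives an error of the stated order. In the density route the delicate point is that for $at=1$ one may not simply replace $h(y)$ by $h(0+)=\pi$: the difference $\tfrac{be}{\pi}\int_{0}^{1}(h(y)-\pi)e^{-bx/y}\dd y$ also contributes at order $x$, through $\int_{0}^{1}(h(y)-\pi)y^{-1}\dd y$, and this correction (together with $\int_{0}^{1}h(y)\dd y=\pi$, which forces $f_{\tilde{X}_{t}}(0+|c\searrow0)=eb$) must be carried through to get the coefficient of $bx$ right; likewise, for $at>1$ the vanishing of the density at $0$ rests on $\int_{0}^{1}h(y)\dd y=0$. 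Since these cancellations are themselves consequences of the transform asymptotics above, the Laplace-inversion route is the cleaner one to push to completion.
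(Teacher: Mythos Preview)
Your proposal is correct and follows essentially the same route as the paper: expand the $\mathrm{AG}$ Laplace transform (\ref{2.13}) for large $\Re u$ and invert term by term, treating the borderline case $at=1$ by going one order further. The paper simply writes $\bar{f}_{\tilde{X}_{t}}(u|c\searrow0)=(eb/u)^{at}(1+O(u^{-1}))$ and cites Bateman for the inverse of $\log u/u^{2}$; you instead derive the inverses of $u^{-s}$ and $u^{-s}\log u$ by differentiating in $s$, and you keep explicit track of the logarithmic piece $-(atb/u)\log(u/b)$ at the next order, which the paper absorbs into its $O(u^{-1})$. This extra care is an improvement: strictly speaking the first-line remainder should be $O(x\log x)$ rather than $O(x)$, exactly because of that log.

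Your second route through the density (\ref{2.18}) is a genuine addition not in the paper. It recovers the $at\neq1$ leading term cleanly via $h(y)\sim\sin(\pi at)\,y^{at-2}$ and Euler's reflection, and your diagnosis of the $at=1$ case is right on the mark: replacing $h$ by $h(0+)=\pi$ gives $eb\cdot bx\,\Gf(-1,bx)$ and hence the displayed coefficient $\gamma-1$, but the correction $\frac{be}{\pi}\int_{0}^{1}(h(y)-\pi)e^{-bx/y}\dd y$ contributes a further term of order $bx$ (through $\int_{0}^{1}(h(y)-\pi)y^{-1}\dd y$), so matching the constant in front of $bx$ ultimately comes down to an identity for that integral. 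As you say, this makes the Laplace-inversion route the cleaner one to push to completion; the paper takes exactly that route and does not attempt the density calculation.
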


\begin{proof}
Instead of working with limits, we consult (\ref{2.13}) in the proof of Corollary \ref{cr:2} and directly obtain the estimate
\begin{equation*}
  \bar{f}_{\tilde{X}_{t}}(u|c\searrow0)=\bigg(\frac{eb}{u}\bigg)^{at}(1+O(u^{-1})),\quad\text{as }\Re u\rightarrow\infty,
\end{equation*}
from where the inversion formula gives
\begin{equation}\label{2.25}
  f_{\tilde{X}_{t}}(x|c\searrow0)=\frac{(eb)^{at}x^{at-1}}{\Gf(at)}(1+O(x)),\quad\text{as }x\searrow0.
\end{equation}
Clearly, (\ref{2.25}) implies
\begin{equation*}
  \lim_{x\searrow0}f_{\tilde{X}_{t}}(x|c\searrow0)=\infty\mathds{1}_{(0,1)}(at)+eb\mathds{1}_{\{1\}}(at)
\end{equation*}
and hence if $at=1$ a finer estimate is needed. We then consider in this case the second-order expansion that
\begin{equation*}
  \bar{f}_{\tilde{X}_{t}}(u|c\searrow0,at=1)=\frac{eb}{u}\bigg(1-\frac{b(1-\log b+\log u)}{u}+O(u^{-2})\bigg).
\end{equation*}
For the Laplace inverse of $\log u/u^{2}$ one can refer to [Bateman, 1954, $\S$5.7.2] \cite{B2} and as a consequence,
\begin{equation*}
  f_{\tilde{X}_{t}}(x|c\searrow0,at=1)=eb(1+bx(\gamma-1+\log(bx))+O(x^{2})),\quad\text{as }x\searrow0.
\end{equation*}
\end{proof}

Clearly, Corollary \ref{cr:4} shows that, in the gamma case, the left tail of $f_{\tilde{X}_{t}}(x|c\searrow0)$ vanishes if and only if $at>1$, while for $at=1$ it goes to $eb>0$ and it explodes if $at<1$. Since $f_{\tilde{X}_{t}}(0|c\searrow0,at=1)=eb$, according to (\ref{2.18}) we have also proven the curious identity
\begin{equation*}
  \int^{1}_{0}\dd y\;\bigg(\frac{1}{y}-1\bigg)^{y-1}\frac{\sin(\pi(1-y))}{y^{2}}=\pi.
\end{equation*}

In the same vein, estimates can be deduced for the tail behaviors of the cumulative distribution function.

\begin{theorem}[\textbf{Tail behaviors of cumulative distribution function}]\label{th:5}
We have for $t>0$
\begin{equation}\label{2.26}
  1-F_{\tilde{X}_{t}}(x)=\frac{ate^{atb^{c}\Gf(-c-1)-bx}}{b^{2}x^{c+2}}(1+O(x^{-c-1})),\quad\text{as }x\rightarrow\infty.
\end{equation}
and
\begin{align}\label{2.27}
  F_{\tilde{X}_{t}}(x)&=e^{atb^{c}c\Gf(-c-1)}-\frac{e^{-atb^{c}\Gf(-c)}}{\pi}\int^{1}_{0}\frac{\dd y}{y}\;\exp\Bigg(-atb^{c}\cos(\pi c)\Gf(-c-1)\bigg(\frac{1}{y}-1\bigg)^{c}-\frac{bx}{y}\Bigg)\\
  &\qquad\times\sin\Bigg(atb^{c}\sin(\pi c)\Gf(-c-1)\bigg(\frac{1}{y}-1\bigg)^{c}\Bigg)(1+O(x^{c-1})),\quad\text{as }x\searrow0. \nonumber
\end{align}
\end{theorem}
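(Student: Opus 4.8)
The plan is to establish both asymptotic estimates directly from the integral representation (\ref{2.17}) of Theorem \ref{th:3}, mirroring the structure of the proof of Theorem \ref{th:4} for the density. For the right tail (\ref{2.26}), I would first integrate the representation (\ref{2.19}) for $f_{\tilde{X}_{t}}$ over $(x,\infty)$, since $1-F_{\tilde{X}_{t}}(x)=\int^{\infty}_{x}f_{\tilde{X}_{t}}(z)\dd z$. Inserting the leading term $ate^{atb^{c}c\Gf(-c-1)-bz}/(bz^{c+2})$ and noting that for the dominant exponential factor $\int^{\infty}_{x}e^{-bz}z^{-c-2}\dd z=e^{-bx}x^{-c-2}/b\,(1+O(x^{-1}))$ by a standard integration-by-parts (Watson's lemma-type) argument, one obtains $ate^{atb^{c}c\Gf(-c-1)-bx}/(b^{2}x^{c+2})(1+O(x^{-c-1}))$ after checking that the $O(x^{-c-1})$ relative error term integrates to something of the same relative order. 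Alternatively, and perhaps more cleanly, I would work with the contour-integral form analogous to (\ref{2.21}): using the same substitution $y=u-b$ in the CDF representation gives $1-F_{\tilde{X}_{t}}(x)=\frac{e^{-atb^{c}\Gf(-c)}}{\pi}\int^{\infty}_{0}\frac{\dd y}{b+y}\exp(\cdots)\sin(\cdots)$, and Taylor-expanding the exponent and sine around $y=0$ exactly as in (\ref{2.22})--(\ref{2.23}) reduces the leading behavior to $-\frac{at\sin(\pi c)\Gf(-c)}{b^{2}(c+1)}e^{-atb^{c}\Gf(-c-1)-bx}\int^{\infty}_{0}y^{c+1}e^{-xy}\dd y$, which equals $-\frac{at\sin(\pi c)\Gf(-c)\Gf(c+2)}{b^{2}(c+1)x^{c+2}}e^{-atb^{c}\Gf(-c-1)-bx}(1+O(x^{-c-1}))$; Euler's reflection formula then collapses the gamma-trig factors to $\pi at/b^{2}$, yielding (\ref{2.26}).

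For the left tail (\ref{2.27}), I would integrate the left-tail density estimate (\ref{2.20}) from $0$ to $x$. Fubini's theorem lets me swap the $y$-integral and the $z$-integral; the inner integral is $\int^{x}_{0}e^{-bz/y}\dd z=\frac{y}{b}(1-e^{-bx/y})$, so the $1/y^{2}$ weight in (\ref{2.20}) becomes a $1/y$ weight and we pick up a factor $(1-e^{-bx/y})/b$. The term proportional to $1$ (from expanding $1-e^{-bx/y}$) gives, upon multiplying through by the prefactor $be^{-atb^{c}\Gf(-c)}/\pi$, precisely $F_{\tilde{X}_{t}}(0^{+})$ minus the integral displayed in (\ref{2.27}), and I would identify the constant $F_{\tilde{X}_{t}}(0^{+})$ by sending $x\searrow0$: since $\int^{1}_{0}\frac{\dd y}{y}e^{-atb^{c}\cos(\pi c)\Gf(-c-1)((1/y)-1)^{c}}\sin(atb^{c}\sin(\pi c)\Gf(-c-1)((1/y)-1)^{c})$ should evaluate (via the same Laplace-inversion identity that produced the density's normalization) to $\pi(1-e^{atb^{c}c\Gf(-c-1)}e^{atb^{c}\Gf(-c)})$... more directly, I would instead take the asymptotic Laplace transform from (\ref{2.24}), $\bar f_{\tilde X_t}(u)=e^{-ab^{c}t\Gf(-c)}\exp(-at\Gf(-c-1)(b+u)^{c})(1+O(u^{c-1}))$, observe that $F_{\tilde{X}_{t}}(0^{+})=\lim_{u\to\infty}\bar f_{\tilde{X}_{t}}(u)$ when the density has an atom-free but possibly non-vanishing left tail — here $\lim_{u\to\infty}(b+u)^{c}=\infty$ forces this limit to $0$ in general, so the constant is genuinely $e^{atb^{c}c\Gf(-c-1)}$ coming not from $F(0^{+})$ but from the $\int^1_0$-integral's own limiting value as $x\searrow0$ together with the exponential prefactor regrouping. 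The cleanest route is: apply the contour deformation to the asymptotic inverse (\ref{2.24}) to get the real-integral form (already displayed at the end of the proof of Theorem \ref{th:4}), integrate that in $z$ from $0$ to $x$ under Fubini, and then verify that the $y$-integral of the pure ($x$-independent) part sums to the constant $e^{atb^{c}c\Gf(-c-1)}$ by recognizing it as $\lim_{x\searrow0}F_{\tilde X_t}(x)$ evaluated through the known Laplace-inverse normalization $\int^{\infty}_{0}\tilde\ell$-type identity, or simply by matching against the exact representation (\ref{2.17}) at $x=0$.

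I expect the main obstacle to be pinning down the additive constant in (\ref{2.27}) rigorously — specifically, showing that the $x$-independent piece of the integrated left-tail density equals $e^{atb^{c}c\Gf(-c-1)}$. This requires either an independent evaluation of the oscillatory integral $\frac{e^{-atb^{c}\Gf(-c)}}{\pi}\int^{1}_{0}\frac{\dd y}{y}e^{-atb^{c}\cos(\pi c)\Gf(-c-1)((1/y)-1)^{c}}\sin(atb^{c}\sin(\pi c)\Gf(-c-1)((1/y)-1)^{c})$ at its face value, or — more efficiently — invoking the exact CDF formula (\ref{2.17}) of Theorem \ref{th:3}: setting $x=0$ in (\ref{2.17}) and using $F_{\tilde X_t}(0^+)$ together with the relationship between the full integrand and the left-tail-reduced integrand (they differ only in the replacement of $y+\cos(\pi c)(1-y)((1/y)-1)^{c}$ by $\cos(\pi c)((1/y)-1)^{c}$ inside the exponent and the loss of the $(1-y)$ factor, both of which are $1+O(x^{\cdot})$ corrections irrelevant to the constant) will force the constant to take the stated value. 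The error-order bookkeeping — confirming that the relative errors $O(x^{1-c})$ in (\ref{2.20}) and $O(x^{-c-1})$ in (\ref{2.19}) survive integration with the same order — is routine Watson's-lemma accounting and I would not belabor it.
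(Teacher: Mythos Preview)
Your treatment of (\ref{2.26}) matches the paper's: integrate the representation (\ref{2.21}) in $x$, then repeat the small-$y$ Taylor expansion of (\ref{2.22})--(\ref{2.23}) and collapse via Euler's reflection formula. That part is fine.

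For (\ref{2.27}), however, you take a genuinely different route from the paper, and the very obstacle you flag --- pinning down the additive constant $e^{atb^{c}c\Gf(-c-1)}$ --- is precisely what the paper's method sidesteps. The paper does \emph{not} integrate the density estimate (\ref{2.20}); instead it works directly with the Laplace transform of the CDF,
\[
  \bar{F}_{\tilde{X}_{t}}(u)=\frac{\bar{f}_{\tilde{X}_{t}}(u)}{u},
\]
inserts the large-$u$ expansion $\bar{f}_{\tilde{X}_{t}}(u)=e^{-ab^{c}t\Gf(-c)}\exp(-at\Gf(-c-1)(b+u)^{c})(1+O(u^{c-1}))$, and repeats the contour deformation of Theorem~\ref{th:2}. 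The extra factor $1/u$ introduces a simple pole at $u=0$ inside the contour, and its residue is
\[
  e^{-ab^{c}t\Gf(-c)}\exp(-at\Gf(-c-1)b^{c})=e^{-atb^{c}(\Gf(-c)+\Gf(-c-1))}=e^{atb^{c}c\Gf(-c-1)},
\]
using $(-c-1)\Gf(-c-1)=\Gf(-c)$. The constant thus falls out for free, and the remaining branch-cut contribution gives the displayed integral exactly as in (\ref{2.20}) with the weight $1/y^{2}$ replaced by $1/y$.

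Your integration-of-the-density route can be made rigorous, but it forces you to evaluate the oscillatory constant integral independently --- which, as your proposal shows, you do not have a clean argument for. The residue mechanism is the missing idea: once you pass to $\bar{F}_{\tilde{X}_{t}}(u)=\bar{f}_{\tilde{X}_{t}}(u)/u$ and deform the contour, the constant is simply $\mathrm{Res}_{u=0}$ of the asymptotic integrand, and no separate identification is needed.
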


\begin{proof}
The proof of (\ref{2.26}) is similar to that of (\ref{2.19}), based on integration of (\ref{2.21}) in $x$ and the fact that $\lim_{x\rightarrow\infty}F_{\tilde{X}_{t}}(x)=1$. On the other hand, deriving (\ref{2.27}) exploits the integration property of Laplace transforms that
\begin{equation*}
  \bar{F}_{\tilde{X}_{t}}(u):=\int^{\infty}_{0}e^{-ux}F_{\tilde{X}_{t}}(x)\dd x=\frac{\bar{f}_{\tilde{X}_{t}}(u)}{u},\quad u\in\mathds{C}\setminus(-\infty,-b],
\end{equation*}
and the proof of (\ref{2.20}) directly applies except for the simple pole of $e^{-at\Gf(-c-1)(u-b)^{c}}/u$ at $u=0$.
\end{proof}

We proceed to the moments of $\tilde{X}_{t}$, where $t\geq0$ is fixed. Denote by $M_{\tilde{G}_{t}}(n)$ its $n$th moment. The presence of Theorem \ref{th:1} guarantees existence of all orders of moments, which satisfy
\begin{equation*}
  M_{\tilde{X}_{t}}(n)\equiv\E\tilde{X}^{n}_{t}=\frac{(-1)^{n}\dd^{n}\bar{f}_{\tilde{X}_{t}}(u)}{\dd u^{n}}\bigg|_{u=0},
\end{equation*}
and a recurrence formula can be given.

\begin{theorem}[\textbf{Moments}]\label{th:6}
\begin{equation}\label{2.28}
  M_{\tilde{X}_{t}}(n)=
  \begin{cases}
    \displaystyle 1,&\text{ if }n=0,\\
    \displaystyle at\sum^{n}_{k=0}\binom{n}{k}\frac{\Gf(k-c+1)}{(k+2)b^{k-c+1}}M_{\tilde{X}_{t}}(n-k),&\text{ if }n\in\mathds{N}_{++},
  \end{cases}\quad t\geq0.
\end{equation}
\end{theorem}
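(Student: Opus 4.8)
The plan is to work from the exponential form $\bar{f}_{\tilde{X}_t}(u)=\exp\psi(u)$, where $\psi(u):=\log\bar{f}_{\tilde{X}_t}(u)$ is, by the closing remark of Theorem~\ref{th:1}, analytic in a neighbourhood of the origin; consequently all cumulants $\kappa_n$ of $\tilde{X}_t$ and all moments $M_{\tilde{X}_t}(n)=(-1)^n\bar{f}_{\tilde{X}_t}^{(n)}(0)$ are finite, and $M_{\tilde{X}_t}(0)=\bar{f}_{\tilde{X}_t}(0)=1$ settles the case $n=0$. The two remaining ingredients are (i) an explicit formula for the cumulants and (ii) the classical recursion relating moments to cumulants.

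For (i) I would use the integral representation (\ref{2.2}): expanding $(b+ux)^{c}=\sum_{m\ge0}\binom{c}{m}b^{c-m}x^{m}u^{m}$ and integrating term by term over $(0,1)$ gives $\psi(u)=at\Gf(-c)\sum_{m\ge1}\binom{c}{m}b^{c-m}u^{m}/(m+1)$, whence $\kappa_m=(-1)^m\psi^{(m)}(0)=(-1)^m\,m!\,at\Gf(-c)\binom{c}{m}b^{c-m}/(m+1)$. The falling-factorial identity $m!\binom{c}{m}=c(c-1)\cdots(c-m+1)=(-1)^m\Gf(m-c)/\Gf(-c)$ then collapses this to $\kappa_m=at\,\Gf(m-c)\,b^{c-m}/(m+1)$; in particular $\kappa_1=at\Gf(1-c)/(2b^{1-c})=\E\tilde{X}_t$, consistently with Corollary~\ref{cr:1}. (Equivalently, the coefficients can be read off (\ref{2.1}) directly via the binomial expansion of $(b+u)^{c+1}$ and Euler's reflection formula.)

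For (ii) I would differentiate the identity $\bar{f}_{\tilde{X}_t}'(u)=\psi'(u)\,\bar{f}_{\tilde{X}_t}(u)$ repeatedly, apply the Leibniz rule to the product on the right, and evaluate at $u=0$; with $\bar{f}_{\tilde{X}_t}^{(m)}(0)=(-1)^m M_{\tilde{X}_t}(m)$ and $\psi^{(m)}(0)=(-1)^m\kappa_m$ all the signs cancel and one is left with the familiar recursion in which the next-order moment is a binomial-weighted sum of the products $\kappa_{k+1}M_{\tilde{X}_t}(\cdot)$ of cumulants against lower-order moments. Inserting the closed form $\kappa_{k+1}=at\,\Gf(k-c+1)/((k+2)\,b^{k-c+1})$ obtained in step~(i) then produces (\ref{2.28}).

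The only slightly delicate point is the gamma-function bookkeeping in step~(i) — converting $\binom{c}{m}$, i.e.\ the descending product $c(c-1)\cdots(c-m+1)$, into $\Gf(m-c)$ by means of the recurrence $z\Gf(z)=\Gf(z+1)$ together with the reflection formula, exactly as done elsewhere in the paper. Beyond this no analytic subtlety arises, since the termwise differentiation and the Taylor/Leibniz manipulations are all legitimated by the analyticity of $\bar{f}_{\tilde{X}_t}$ near the origin noted after Theorem~\ref{th:1}.
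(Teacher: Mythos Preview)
Your proposal is correct and follows essentially the same two-step strategy as the paper: first compute the cumulants in closed form, then feed them into the standard cumulant--moment recursion. The only cosmetic differences are that you extract the cumulants from the integral representation~(\ref{2.2}) by expanding $(b+ux)^{c}$ and integrating termwise, whereas the paper expands $(b+u)^{c+1}$ directly in~(\ref{2.1}); and you derive the recursion by Leibniz-differentiating $\bar{f}_{\tilde{X}_t}'=\psi'\bar{f}_{\tilde{X}_t}$, whereas the paper invokes the exponential formula/Bell polynomials --- these are of course the same identity.
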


\begin{proof}
First we determine the cumulants of $\tilde{X}_{t}$, denoted $C_{\tilde{X}_{t}}(n)$, for $n\in\mathds{N}$, which are the series coefficients inside the Laplace exponent
\begin{equation}\label{2.29}
  \log\bar{f}_{\tilde{X}_{t}}(u)=\sum^{\infty}_{n=0}\frac{C_{\tilde{X}_{t}}(n)(-u)^{n}}{n!}.
\end{equation}
According to Theorem \ref{th:1}, the Laplace exponent admits a simple binomial series expansion about the origin, which gives after some rearrangement
\begin{equation}\label{2.30}
  \log\bar{f}_{\tilde{X}_{t}}(u)=at\Gf(-c)\sum^{\infty}_{n=2}\binom{c+1}{n}\frac{b^{c+1-n}u^{n-1}}{c+1}.
\end{equation}
Comparing (\ref{2.29}) with (\ref{2.30}) and matching coefficients we obtain the cumulant formula
\begin{equation}\label{2.31}
  C_{\tilde{X}_{t}}(n)=at\Gf(-c)\binom{c+1}{n+1}\frac{(-1)^{n}n!}{(c+1)b^{n-c}}=\frac{at(-1)^{n}\Gf(-c)\Gf(c+1)}{(n+1)b^{n-c}\Gf(c-n+1)},\quad n\in\mathds{N}_{++},
\end{equation}
with $C_{\tilde{X}_{t}}(0)=0$, recalling that $\bar{f}_{\tilde{X}_{t}}(u)$ has a removable singularity at the origin. For $n$ integer-valued, we use again Euler's reflection formula $n$ times to simplify (\ref{2.31}) into
\begin{equation}\label{2.32}
  C_{\tilde{X}_{t}}(n)=\frac{at\Gf(n-c)}{(n+1)b^{n-c}},\quad n\in\mathds{N}_{++}.
\end{equation}

Then, since the moments are identified as the series coefficients in
\begin{equation*}
  \bar{f}_{\tilde{X}_{t}}(u)=\sum^{\infty}_{n=0}\frac{M_{\tilde{X}_{t}}(n)(-u)^{n}}{n!},
\end{equation*}
in line with the cumulant formula (\ref{2.32}) we employ the exponential formula in combinatorics (see, e.g., [Stanley, 1999, $\S$1.1] \cite{S2}) and consequently obtain the Bell polynomials
\begin{equation*}
  M_{\tilde{X}_{t}}(n)=\mathrm{Bell}(C_{\tilde{X}_{t}}(k)|k\in\mathds{N}_{++}\cap[1,n]),\quad n\in\mathds{N},
\end{equation*}
which is also recurrently given by
\begin{equation}\label{2.33}
  M_{\tilde{X}_{t}}(0)=1\rightsquigarrow M_{\tilde{X}_{t}}(n)=\sum^{n}_{k=0}\binom{n}{k}C_{\tilde{X}_{t}}(k+1)M_{\tilde{X}_{t}}(n-k),\quad n\in\mathds{N}_{++}.
\end{equation}
Substituting (\ref{2.32}) into (\ref{2.33}) completes the proof.
\end{proof}

Alternatively, one may also exploit Theorem \ref{th:2} together with the fundamental relation
\begin{equation}\label{2.34}
  M_{\tilde{X}_{t}}(n)=\int^{\infty}_{0}x^{n}f_{\tilde{X}_{t}}(x)\dd x,\quad n\in\mathds{N}
\end{equation}
to obtain an equivalent numerical integral formula for the moments, namely
\begin{align*}
  M_{\tilde{X}_{t}}(n)&=\frac{e^{-atb^{c}\Gf(-c)}n!}{\pi b^{n}}\int^{1}_{0}\dd y\;y^{n-1}\exp\Bigg(-atb^{c}\Gf(-c-1)\Bigg(y+\cos(\pi c)(1-y)\bigg(\frac{1}{y}-1\bigg)^{c}\Bigg)\Bigg)\\
  &\qquad\times\sin\Bigg(atb^{c}\sin(\pi c)\Gf(-c-1)(1-y)\bigg(\frac{1}{y}-1\bigg)^{c}\Bigg),\quad x>0,
\end{align*}
for any $t>0$. We also highlight that taking $c$ to be 0 in (\ref{2.28}) is perfectly legitimate, which automatically gives the moment formula for the $\mathrm{AG}(at,b)$ distribution.

Regarding the four crucial statistics, the mean and variance of $\tilde{X}_{t}$ are simply $C_{\tilde{X}_{t}}(1)$ and $C_{\tilde{X}_{t}}(2)$, respectively, while its skewness and excess kurtosis are respectively calculated as $C_{\tilde{X}_{t}}(3)\big/C^{3/2}_{\tilde{X}_{t}}(2)$ and $C_{\tilde{X}_{t}}(4)\big/C^{2}_{\tilde{X}_{t}}(2)$. Specifically, we obtain
\begin{align}\label{2.35}
  \E\tilde{X}_{t}&=\frac{at\Gf(1-c)}{2b^{1-c}}=\frac{1}{2}\E X_{t},\\
  \Var\tilde{X}_{t}&=\frac{at\Gf(2-c)}{3b^{2-c}}=\frac{1}{3}\Var X_{t}, \nonumber\\
  \Skew\tilde{X}_{t}&=\frac{3\sqrt{3}(2-c)\Gf(3-c)}{4\sqrt{atb^{c}\Gf(2-c)^{3}}}=\frac{3\sqrt{3}}{4}\Skew X_{t}, \nonumber\\
  \EKurt\tilde{X}_{t}&=\frac{9\Gf(4-c)}{5atb^{c}\Gf(2-c)^{2}}=\frac{9}{5}\EKurt X_{t}. \nonumber
\end{align}
This indicates that the $\mathrm{ATS}(a,b;c)$ distribution is more asymmetric and leptokurtic than the $\mathrm{TS}(a,b;c)$ distribution. At the same time, one can easily obtain the corresponding statistics for the $\mathrm{AG}(a,b)$ and $\mathrm{AI}(a,b)$ distributions, by sending $c\searrow0$ and taking $c=1/2$ respectively in (\ref{2.35}).

In addition, for arbitrary times $t,v>0$, using (\ref{2.35}) and the fact that $\E(X_{t}X_{v})=(t\wedge v)\Var X_{1}+(\E X_{1})^{2}tv$ based on the L\'{e}vy properties of $X$ the following covariance function can be given from (\ref{1.4}),
\begin{equation*}
  \Cov(\tilde{X}_{t},\tilde{X}_{v})=\frac{(t\wedge v)(3(t\vee v)-t\wedge v)\Var X_{1}}{6(t\vee v)}=\frac{a\Gf(2-c)(3(t\vee v)-t\wedge v)}{6b^{2-c}}\bigg(\frac{v}{t}\wedge\frac{t}{v}\bigg),
\end{equation*}
which is different from that of $\Lambda$. In fact, since $\Lambda$ is itself a L\'{e}vy process we have immediately $\Cov(\Lambda_{t},\Lambda_{v})=\Var\Lambda_{t\wedge v}=a\Gf(2-c)(t\wedge v)/(3b^{2-c})$.

The next corollary is presented to describe the asymptotic behavior of the moments for large orders.

\begin{corollary}[\textbf{Large-order behavior of moments}]\label{cr:5}
\begin{equation*}
  M_{\tilde{X}_{t}}(n)=\frac{ate^{atb^{c}c\Gf(-c-1)}\Gf(n-c-1)}{b^{n-c}}(1+O(n^{-c-1})),\quad\text{as }n\rightarrow\infty.
\end{equation*}
\end{corollary}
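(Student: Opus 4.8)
The plan is to read off the large-order behavior of $M_{\tilde{X}_{t}}(n)$ from the right-tail estimate of the density already obtained in Theorem \ref{th:4}, fed through the moment representation (\ref{2.34}). This is the familiar principle --- essentially Watson's lemma in disguise --- that for a finite-variation subordinator with an exponentially tempered L\'{e}vy density, moments of large order are dictated by the behavior of the density at infinity. Abbreviate $\kappa:=atb^{c}c\Gf(-c-1)$, and let $g(x):=(ate^{\kappa}/b)x^{-c-2}e^{-bx}$, which by (\ref{2.19}) is precisely the leading term of $f_{\tilde{X}_{t}}(x)$ as $x\rightarrow\infty$; note that the constants $at/b$ and $e^{\kappa}$ are already built into that estimate, so no further juggling of gamma values will be needed.

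First I would isolate the main contribution. Since $n-c-1>0$ for $n\geq2$, the elementary gamma integral gives
\begin{equation*}
  \int^{\infty}_{0}x^{n}g(x)\,\dd x=\frac{ate^{\kappa}}{b}\int^{\infty}_{0}x^{n-c-2}e^{-bx}\,\dd x=\frac{ate^{\kappa}}{b}\cdot\frac{\Gf(n-c-1)}{b^{n-c-1}}=\frac{ate^{\kappa}\Gf(n-c-1)}{b^{n-c}},
\end{equation*}
which is exactly the claimed leading term. It then remains to show that $M_{\tilde{X}_{t}}(n)-\int^{\infty}_{0}x^{n}g(x)\,\dd x$ is $O(n^{-c-1})$ times $ate^{\kappa}\Gf(n-c-1)/b^{n-c}$. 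I would split $\int^{\infty}_{0}x^{n}(f_{\tilde{X}_{t}}(x)-g(x))\,\dd x=\int^{A_{0}}_{0}+\int^{\infty}_{A_{0}}$, where $A_{0}$ and $C_{1}$ are the threshold and constant provided by the $O(x^{-c-1})$ term in (\ref{2.19}), so that $|f_{\tilde{X}_{t}}(x)-g(x)|\leq C_{1}(ate^{\kappa}/b)x^{-2c-3}e^{-bx}$ for $x\geq A_{0}$. On $[A_{0},\infty)$ the gamma integral bounds this piece by $C_{1}(ate^{\kappa}/b)\Gf(n-2c-2)/b^{n-2c-2}$, and the asymptotic $\Gf(n+\mu)/\Gf(n+\nu)\sim n^{\mu-\nu}$ turns $\Gf(n-2c-2)/\Gf(n-c-1)$ into $O(n^{-c-1})$, so this contribution is exactly of the stated relative order. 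On $[0,A_{0}]$ I would bound $x^{n}\leq A_{0}^{n}$, use $\int^{A_{0}}_{0}f_{\tilde{X}_{t}}\leq1$ together with the trivial bound $\int^{A_{0}}_{0}x^{n-c-2}e^{-bx}\,\dd x\leq A_{0}^{n-c-1}/(n-c-1)$, obtaining a contribution of order $A_{0}^{n}$; since $\Gf(n-c-1)$ eventually dominates every geometric sequence, this is $o(n^{-N})$ relative to the main term for all $N>0$ and hence absorbed into the error. Combining the three estimates gives the corollary.

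The one genuine obstacle is keeping the error sharp: a crude comparison would only yield $o(1)$ relative accuracy, whereas the stated $O(n^{-c-1})$ relies on (i) the fact that (\ref{2.19}) is a bona fide $O(x^{-c-1})$ bound with uniform constant beyond $A_{0}$, and (ii) the gamma-ratio asymptotic that converts this into $O(n^{-c-1})$ after multiplying by $x^{n}$ and integrating. As a consistency check, I would note that the prefactor $e^{\kappa}$ is nothing but $\bar{f}_{\tilde{X}_{t}}(-b)=\sum_{j\geq0}M_{\tilde{X}_{t}}(j)b^{j}/j!$, as follows directly from (\ref{2.1}) upon evaluating the Laplace exponent at $u=-b$ and using $\Gf(-c)=-(c+1)\Gf(-c-1)$; the same constant reappears if one instead resums the dominant terms (those with index near $n$) in the cumulant--moment relation underlying (\ref{2.28}), each of which contributes approximately $(at\Gf(n-c-1)/b^{n-c})\,M_{\tilde{X}_{t}}(j)b^{j}/j!$.
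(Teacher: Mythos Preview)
Your approach is essentially the same as the paper's: substitute the right-tail estimate (\ref{2.19}) into the moment integral (\ref{2.34}), evaluate the leading gamma integral, and control the remainder via the ratio $\Gf(n-2c-2)/\Gf(n-c-1)=O(n^{-c-1})$. The paper does exactly this, only more tersely---it writes the whole moment as $\int_{0}^{\infty}x^{n-c-2}e^{-bx}(1+O(x^{-c-1}))\,\dd x$ without explicitly separating off the finite interval $[0,A_{0}]$ where (\ref{2.19}) is not asserted, and then invokes Stirling's formula for the gamma ratio. Your split at $A_{0}$ and the bound $A_{0}^{n}=o(n^{-N})\Gf(n-c-1)$ make that step rigorous; the consistency check $e^{\kappa}=\bar{f}_{\tilde{X}_{t}}(-b)$ is a pleasant extra but not used in the argument itself.
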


\begin{proof}
Substituting (\ref{2.19}) into (\ref{2.34}) it follows that
\begin{align*}
  M_{\tilde{X}_{t}}(n)&=\frac{ate^{atb^{c}c\Gf(-c-1)}}{b}\int^{\infty}_{0}\dd x\;x^{n-c-2}e^{-bx}(1+O(x^{-c-1}))\\
  &=\frac{ate^{atb^{c}c\Gf(-c-1)}}{b}(b^{c-n+1}\Gf(n-c-1)+O(\Gf(n-2c-2))),
\end{align*}
for $n$ large. Then Stirling's formula applied to $\Gf(\cdot)$ tells us that
\begin{equation*}
  O\bigg(\frac{\Gf(n-2c-2)}{\Gf(n-c-1)}\bigg)=O\bigg(\frac{((n-2c-3)/e)^{n-2c-3}}{((n-c-2)/e)^{n-c-2}}\bigg)=O(n^{-c-1}),\quad\text{as }n\rightarrow\infty,
\end{equation*}
which yields the desired result.
\end{proof}

\vspace{0.2in}

\section{Numerical experiments}\label{sec:3}

In this section we give some numerical examples to illustrate the validity and efficiency of selected formulas for the $\mathrm{ATS}(at,b;c)$ distribution, namely that of $\tilde{X}_{t}$ for $t>0$. Since $b$ places only a scaling effect and $t$ is a multiplier of $a$, we will fix $b=t=1$, while making comparison across four choices of the shape parameter, with $a=1/2,1,3/2,2$, and two choices of the stability parameter $c\searrow0$ and $c=1/2$, which actually correspond to the $\mathrm{AG}(at,b)$ and $\mathrm{AIG}(at,b)$ distributions, respectively.

First, Figure \ref{fig:1} applies Theorem \ref{th:1} to plot for each choice of $\{a;c\}$ the moment generating function of $\tilde{X}_{1}$, $\bar{f}_{\tilde{X}_{1}}(-u)$, for $u<b=1$.

\begin{figure}[H]
  \centering
  \begin{minipage}[c]{0.49\linewidth}
  \centering
  \includegraphics[width=2.5in]{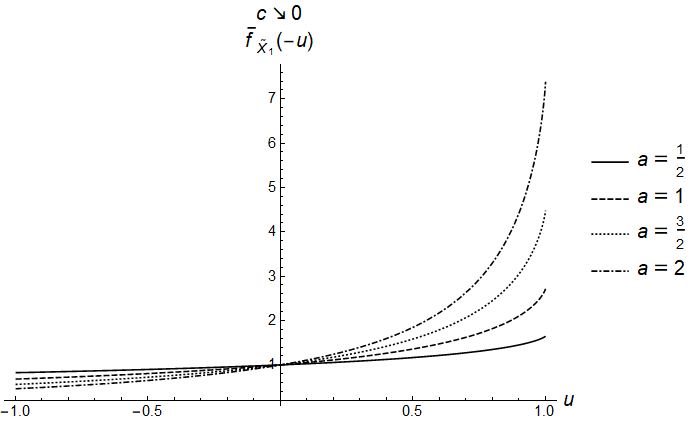}
  \end{minipage}
  \begin{minipage}[c]{0.49\linewidth}
  \centering
  \includegraphics[width=2.5in]{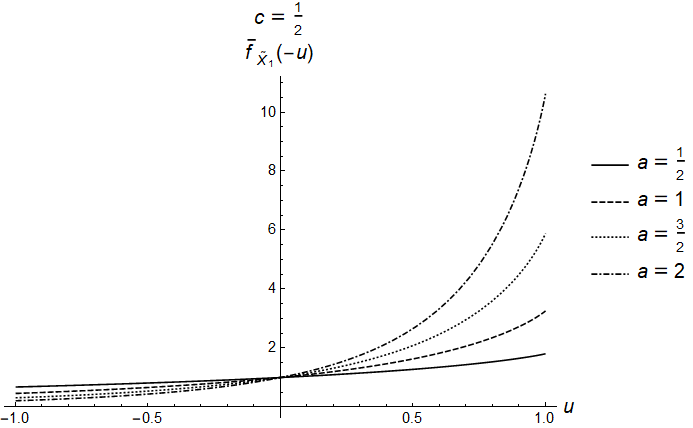}
  \end{minipage}
  \caption{Moment generating function of $\tilde{X}_{1}$ under different parameter values}
  \label{fig:1}
\end{figure}

Then, we report in Table \ref{tab:1} the first 6 (including the zeroth) moments of $\tilde{X}_{1}$, using Theorem \ref{th:6}.

\begin{table}[H]\scriptsize
  \centering
  \def\arraystretch{1.2}
  \caption{Moments of $\tilde{X}_{1}$ under different parameter values}
  \label{tab:1}
  \begin{tabular}{c|cccccc}
    \hline
    \multicolumn{7}{c}{$c\searrow0$} \\
    \hline
    $a;M_{\tilde{X}_{1}}(n);n$ & $0$ & $1$ & $2$ & $3$ & $4$ & $5$ \\
    \hline
    $\frac{1}{2}$ & $1$ & $\frac{1}{4}$ & $\frac{11}{48}$ & $\frac{25}{64}$ & $\frac{3839}{3840}$ & $\frac{3537}{1024}$ \\
    $1$ & $1$ & $\frac{1}{2}$ & $\frac{7}{12}$ & $\frac{9}{8}$ & $\frac{743}{240}$ & $\frac{1075}{96}$ \\
    $\frac{3}{2}$ & $1$ & $\frac{3}{4}$ & $\frac{17}{16}$ & $\frac{147}{64}$ & $\frac{8709}{1280}$ & $\frac{26499}{1024}$ \\
    $2$ & $1$ & $1$ & $\frac{5}{3}$ & $4$ & $\frac{191}{15}$ & $51$ \\
    \hline\hline
    \multicolumn{7}{c}{$c=\frac{1}{2}$} \\
    \hline
    $a;M_{\tilde{X}_{1}}(n);n$ & $0$ & $1$ & $2$ & $3$ & $4$ & $5$ \\
    \hline
    $\frac{1}{2}$ & $1$ & $\frac{\sqrt{\pi}}{4}$ & $\frac{3\pi+4\sqrt{\pi}}{48}$ & $\frac{\sqrt{\pi}(\pi+4\sqrt{\pi}+6)}{64}$ & $\frac{3\pi^{2}+24\pi^{3/2}+88\pi+144\sqrt{\pi}}{768}$ & $\frac{\sqrt{\pi}(3\pi^{2}+40\pi^{3/2}+260\pi+960\sqrt{\pi}+1680)}{3072}$ \\
    $1$ & $1$ & $\frac{\sqrt{\pi}}{2}$ & $\frac{3\pi+2\sqrt{\pi}}{12}$ & $\frac{\sqrt{\pi}(2\pi+4\sqrt{\pi}+3)}{16}$ & $\frac{3\pi^{2}+12\pi^{3/2}+22\pi+18\sqrt{\pi}}{48}$ & $\frac{\sqrt{\pi}(3\pi^{2}+20\pi^{3/2}+65\pi+120\sqrt{\pi}+105)}{96}$ \\
    $\frac{3}{2}$ & $1$ & $\frac{3\sqrt{\pi}}{4}$ & $\frac{9\pi+4\sqrt{\pi}}{16}$ & $\frac{9\sqrt{\pi}(3\pi+4\sqrt{\pi}+2)}{64}$ & $\frac{3(27\pi^{2}+72\pi^{3/2}+88\pi+48\sqrt{\pi})}{256}$ & $\frac{3\sqrt{\pi}(81\pi^{2}+360\pi^{3/2}+780\pi+960\sqrt{\pi}+560)}{1024}$ \\
    $2$ & $1$ & $\sqrt{\pi}$ & $\frac{3\pi+\sqrt{\pi}}{3}$ & $\frac{\sqrt{\pi}(8\pi+8\sqrt{\pi}+3)}{8}$ & $\frac{12\pi^{2}+24\pi^{3/2}+22\pi+9\sqrt{\pi}}{12}$ & $\frac{\sqrt{\pi}(48\pi^{2}+160\pi^{3/2}+260\pi+240\sqrt{\pi}+105)}{48}$ \\
    \hline
  \end{tabular}
\end{table}

Obviously, from (\ref{2.28}) we have the simple relation that $C_{\tilde{G}_{t}}(n)\propto a$, for any given $t$, $b$ and $n$. Also, the first cumulant and the first moment must coincide. It is also seen that specialized results are obtainable in the gamma and inverse Gaussian cases, while in general the moments will involve gamma functions.

Evaluating the probability density function and the cumulative distribution function requires numerical computation of the integral representations (\ref{2.14}) and (\ref{2.17}) with (\ref{2.18}). This can be efficiently carried out by using the numerical integration function of Mathematica$^{\circledR}$ by [Wolfram Research, Inc., 2015] \cite{W}, which uses by default the Gauss-Kronrod quadrature rule. Recall that the integrands in both (\ref{2.14}) and (\ref{2.17}) are continuous and bounded over the unit interval. Hence, in Figure \ref{fig:2} (in two pages) we plot the probability density function of $\tilde{X}_{1}$ using different shape and family parameters, where for ease of comparison the density function of $X_{1}$ is also included. As a reminder, we have explicitly
\begin{equation}\label{3.1}
  f_{X_{t}}(x|c\searrow0)=\frac{b^{at}}{\Gf(at)}x^{at-1}e^{-bx}\quad\text{and}\quad f_{X_{t}}\bigg(x\bigg|c=\frac{1}{2}\bigg)=\frac{at}{x^{3/2}}\exp\bigg(-\frac{(\sqrt{b}x-\sqrt{\pi}at)^{2}}{x}\bigg),
\end{equation}
for $x>0$, which are the $\mathrm{G}(at,b)$ and $\mathrm{IG}(at,b)$ density functions, respectively.

\begin{figure}[H]
  \centering
  \begin{minipage}[c]{0.49\linewidth}
  \centering
  \includegraphics[width=2.5in]{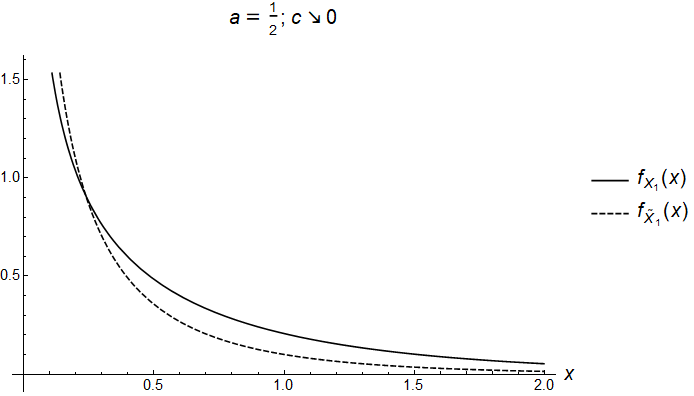}
  \includegraphics[width=2.5in]{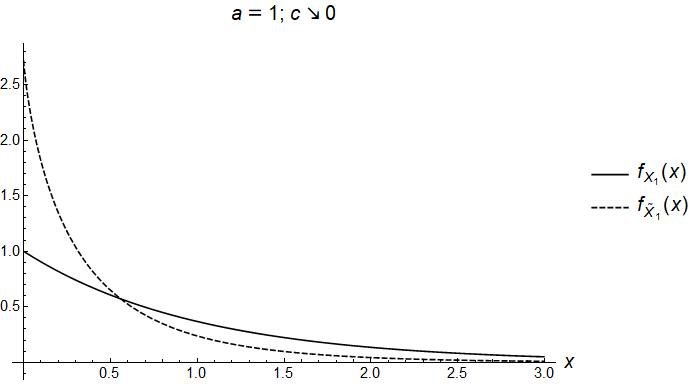}
  \includegraphics[width=2.5in]{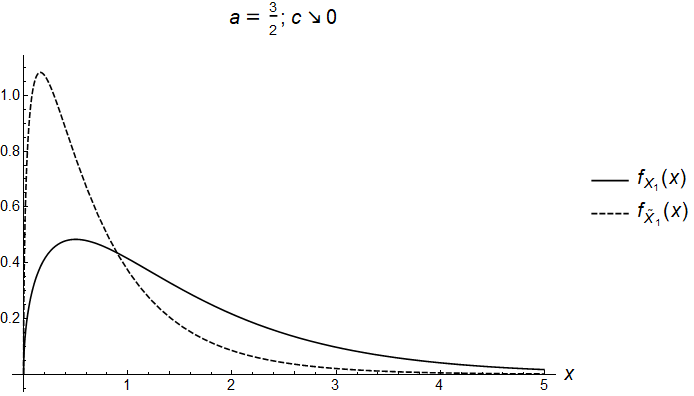}
  \includegraphics[width=2.5in]{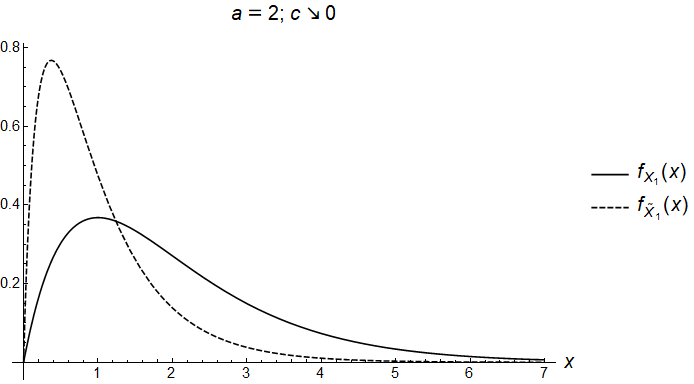}
  \end{minipage}
  \begin{minipage}[c]{0.49\linewidth}
  \centering
  \includegraphics[width=2.5in]{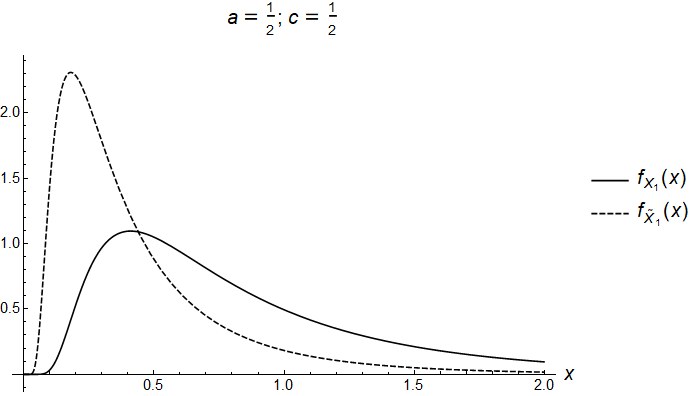}
  \includegraphics[width=2.5in]{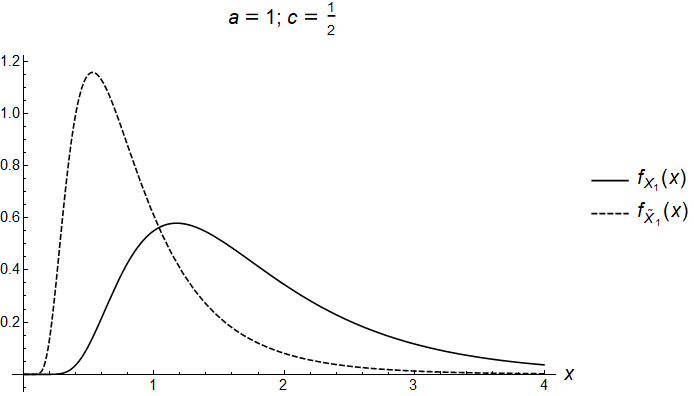}
  \includegraphics[width=2.5in]{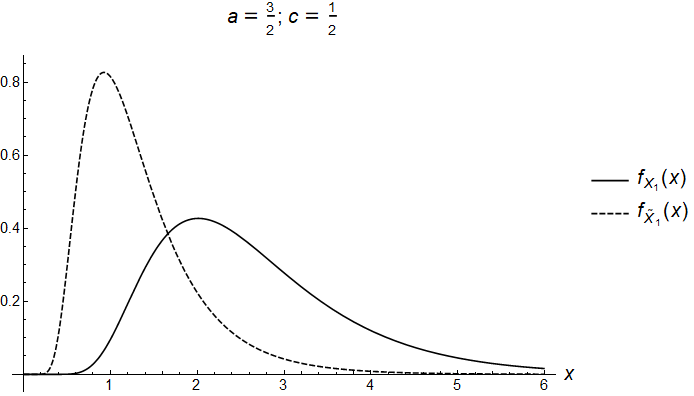}
  \includegraphics[width=2.5in]{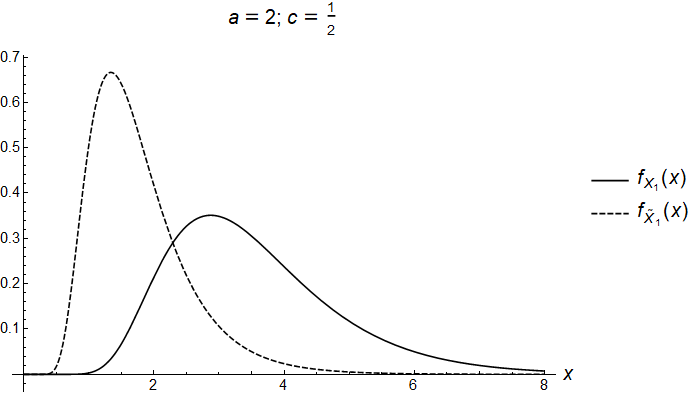}
  \end{minipage}
  \caption{Probability density function of $\tilde{X}_{1}$ under different parameter values}
  \label{fig:2}
\end{figure}

It is seen that by averaging the density function of $\tilde{X}_{1}$ becomes steeper relative to that of $X_{1}$, regardless of the value of $a$ or $c$. Also, as convinced by (\ref{2.35}) the $\mathrm{ATS}(a,b;c)$ has a more skewed and heavier right tail compared with the $\mathrm{TS}(a,b;c)$ distribution under the same parameter values. Another observation is that, in the gamma case, $\lim_{x\searrow0}\tilde{f}_{\tilde{X}_{1}}(x|c\searrow0)=\lim_{x\searrow0}\tilde{f}_{X_{1}}(x|c\searrow0)=\infty$ for $a<1$ and $\lim_{x\searrow0}\tilde{f}_{\tilde{X}_{1}}(x|c\searrow0)=\lim_{x\searrow0}\tilde{f}_{X_{1}}(x|c\searrow0)=0$ for $a>1$, while for $a=1$ these two limits exist and are strictly positive. This generally agrees with the discussion of Corollary \ref{cr:4}, which tells along with (\ref{3.1}) further that these limits equal $e$ and 1, respectively, with $b=1$. In the inverse Gaussian case, on the other hand, we have $\lim_{x\searrow0}\tilde{f}_{\tilde{X}_{1}}(x|c=1/2)=0$ uniformly in $a$.

The cumulative distribution function of $\tilde{X}_{1}$ is plotted in Figure \ref{fig:3} on the next page. \clearpage

\begin{figure}[H]
  \centering
  \begin{minipage}[c]{0.49\linewidth}
  \centering
  \includegraphics[width=2.5in]{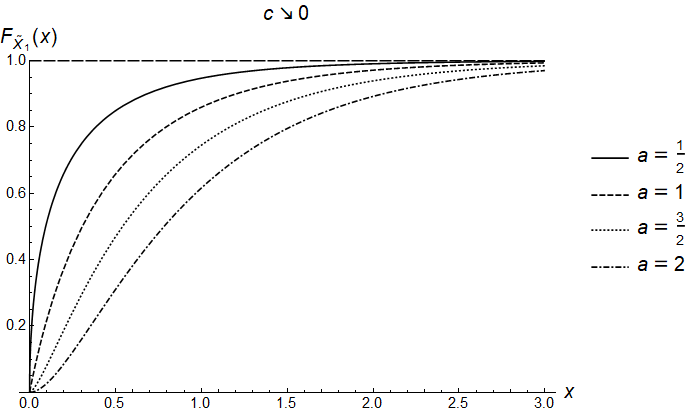}
  \end{minipage}
  \begin{minipage}[c]{0.49\linewidth}
  \centering
  \includegraphics[width=2.5in]{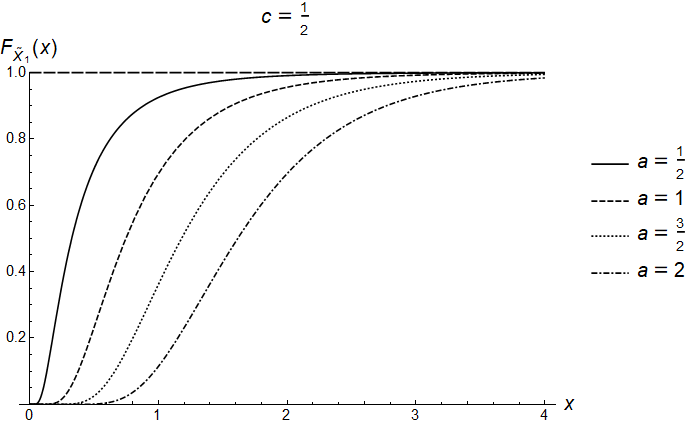}
  \end{minipage}
  \caption{Cumulative distribution function of $\tilde{X}_{1}$ under different parameter values}
  \label{fig:3}
\end{figure}

\vspace{0.2in}

\section{Extensions and applications}\label{sec:4}

We discuss two natural functional extensions of the running average process $\tilde{X}$ and its equi-distributed average-tempered stable subordinator $\Lambda$. As aforementioned in Section \ref{sec:1}, we will focus on applications in the context of structural degradation and financial derivatives pricing, which in general can be efficiently implemented thanks to the distributional formulas proposed in Section \ref{sec:2}.

\vspace{0.2in}

\subsection{Degradation modeling}

The random occurrence of degradation of a structural component, as commonly modeled by the gamma process $G$, is naturally a stochastic process with monotone sample paths. Since the tempered stable subordinator $X$ generalizes $G$ by having more complicated path behaviors while still remaining nonnegative and nondecreasing, it is natural to use $X$ as a model for the degradation process. In contrast to having purely discontinuous sample paths as $X$ does, another popular degradation model is simply the drift process $(At)$, which is continuous and also features a constant instantaneous rate of degradation $A>0$.

We recall from Section \ref{sec:1} that the running average process $\tilde{X}$ of $X$ is a nondecreasing and continuous process starting from 0 and hence allows for degradation not only in a continuous manner but at a time-dependent rate as well. In particular, in this case the instantaneous rate of degradation $A\equiv(A_{t})$ varies with the passage of time and is such that
\begin{equation*}
  \PP\bigg\{\int^{t}_{0}A_{s}\dd s=\tilde{X}_{t},\;\forall t\geq0\bigg\}=1,
\end{equation*}
which is precisely given by
\begin{equation*}
  A_{t}=\frac{X_{t}-\tilde{X}_{t}}{t},\quad t>0.
\end{equation*}
At $t=0$, with $\E e^{-uX_{t}/t}=1+O(t)$ for $u\in\mathds{C}\setminus(-\infty,-b]$, an application of L\'{e}vy's continuity theorem implies $\lim_{t\searrow0}X_{t}/t=0$, $\PP$-a.s. The fact that $A_{0}$ is $\PP$-\text{a.s.} 0 then results from dominated convergence applied to the difference $X_{t}-\tilde{X}_{t}$, for any $t>0$. On the other hand, applying It\^{o}'s formula yields the dynamics
\begin{equation*}
  \dd A_{t}=\frac{\dd X_{t}-2A_{t}\dd t}{t},\quad A_{0}=0,
\end{equation*}
which further shows that under $\tilde{X}$, the instantaneous degradation rate is a stochastic process which exhibits mean reversion and decreasing variance over time\footnote{We notice that stabilizing trends are a common trait of many degradation phenomena. See, e.g., [Wang et al, 2015, $\S$4.1] \cite{WXM} or the application to be presented in this section.}.

Let $l>0$ be the initial observed condition of the structural component and consider the stopping time $\tau_{l}:=\inf\{t>0:\tilde{X}_{t}\geq l\}$. Then the evolution of the component condition can be modeled as the stopped flipped process $D\equiv(D_{t}):=(l-\tilde{X}_{t\wedge\tau_{l}})$. Since the path continuity of $\tilde{X}$ renders $\tau_{l}$ a hitting time which rules out the possibility of overshoot, we can write equivalently
\begin{equation*}
  D_{t}=(l-\tilde{X}_{t})^{+},\quad t\geq0,
\end{equation*}
where $(\cdot)^{+}$ denotes the positive part and for which $D_{\tau_{l}}=0$, $\PP$-a.s.

Using Theorem \ref{th:2} and Theorem \ref{th:3}, computation of the expected condition at some fixed time $T>0$ of observation is straightforward, in terms of
\begin{align*}
  \E D_{T}&=l\PP\{\tilde{X}_{T}\leq l\}-\int^{l}_{0}xf_{\tilde{X}_{T}}(x)\dd x\\
  &=lF_{\tilde{X}_{T}}(l)-\frac{e^{-atb^{c}\Gf(-c)}}{\pi b}\int^{1}_{0}\frac{\dd y}{y}\;\exp\Bigg(-atb^{c}\Gf(-c-1)\Bigg(y+\cos(\pi c)(1-y)\bigg(\frac{1}{y}-1\bigg)^{c}\Bigg)\Bigg)\\
  &\qquad\times\sin\Bigg(atb^{c}\sin(\pi c)\Gf(-c-1)(1-y)\bigg(\frac{1}{y}-1\bigg)^{c}\Bigg)(y-(bl+y)e^{-bl/y}).
\end{align*}
The probability that the condition level at time $T$ will be higher than some fixed lower alert level or barrier $\underline{D}\in(0,l)$ is then
\begin{equation}\label{4.1.1}
  \PP\{D_{T}>\underline{D}\}=\int^{\infty}_{0}\mathds{1}_{\{(l-x)^{+}>\underline{D}\}}f_{\tilde{X}_{T}}(x)\dd x=F_{\tilde{X}_{T}}(l-\underline{D})\equiv\PP\{\tilde{X}_{T}\leq l-\underline{D}\},
\end{equation}
which is nothing but the survival function of $D_{T}$. In other words, the event $\{D_{T}\leq\underline{D}\}$ is deemed to be the failure of the structural component at time $T$. The lifetime of the structural component is hence identified as the first passage time $\tau_{\underline{D}}:=\inf\{t>0:\tilde{D}_{t}<\underline{D}\}$, and by path monotonicity has the distribution $\PP\{\tau_{\underline{D}}>T\}=F_{\tilde{X}_{T}}(l-\underline{D})$, for $T>0$, as well, and the density function of $\tau_{\underline{D}}$ can be obtained via differentiating $1-F_{\tilde{X}_{T}}(l-\underline{D})$ in $T$. However, it is not a trivial task to extrapolate a convenient formula for the expected lifetime because of noninterchangeable integrals, as in the simple gamma model. Instead, one may consider the median lifetime, which is given by the solution $T^{\rm m}>0$ of $F_{\tilde{X}_{T^{\rm m}}}(l-\underline{D})=1/2$ and is also always existent.

Notice that $\tilde{X}$ cannot be a Markov process by its construction (\ref{1.4}), and hence performing parameter estimation based on the transition density will be cumbersome. Despite this, the non-Markovian property is deemed benign by incorporating memory into the degradation process, i.e., the past condition can influence future degradation behavior. In this case, the data will need to be modified in order to be accessible for estimation. Suppose we observe the degradation levels of a certain structural component at times $0=t_{0}<t_{1}<\cdots<t_{M-1}<t_{M}=T$, not necessarily equally spaced, before a predetermined date $T>0$, denoted by $\{\check{\tilde{X}}_{t_{m}}\}_{m\in\mathds{N}\cap[1,M]}$ with $\check{\tilde{X}}_{0}=0$. Then, under the running average model $\tilde{X}$, one choice is to perform the approximate transformation
\begin{equation}\label{4.1.2}
  \check{X}_{t_{m}}:=\frac{t_{m}\check{\tilde{X}}_{t_{m}}-t_{m-1}\check{\tilde{X}}_{t_{m-1}}}{t_{m}-t_{m-1}}, \quad\text{and}\quad\check{\xi}_{m}:=\check{X}_{t_{m}}-\check{X}_{t_{m-1}},
\end{equation}
with $\check{X}_{0}=0$, and where $\check{\xi}_{m}$'s, for $m\in\mathds{N}\cap[2,M]$, can be viewed as independent random variables, each having a corresponding $\mathrm{TS}(a(t_{m}-t_{m-1}),b;c)$ distribution, with $\check{\xi}_{1}=\check{\tilde{X}}_{t_{1}}$. In other words, the running average model postulates that the scaled second-order difference sequence of the data consists of independent tempered stable-distributed elements. This transformation enables us to employ various estimation methods intended for tempered stable distributions.

It is worth mentioning that the probability density function of $X_{t}$, like that of $\tilde{X}_{t}$ provided in (\ref{2.14}), does not have a general closed form, except in the gamma and inverse Gaussian cases, as in (\ref{3.1}). For this reason, parameter estimation is oftentimes deemed challenging with respect to the family parameter $c$, though due to the closed-form Laplace transform in (\ref{1.1}) both the empirical characteristic function estimation method discussed in [Yu, 2004] \cite{Y2} and the more oriented quantile-based inference method proposed in [Fallahgoul et al, 2019] \cite{FVF} can still be employed. In the following application, for convenience we only consider the gamma and inverse Gaussian cases, with $c\searrow0$ and $c=1/2$, respectively, for which (\ref{3.1}) permits direct implementation of maximum likelihood estimation with the log-likelihood function
\begin{equation*}
  L(a,b|c;\check{\xi}_{2},\dots,\check{\xi}_{M})=\sum^{M}_{m=2}\log f_{X_{t_{m}-t_{m-1}}}(\check{\xi}_{m}|a,b;c),
\end{equation*}
which is to be maximized over $a,b>0$ in order to determine the corresponding estimates $\hat{a}$ and $\hat{b}$. The Akaike information criterion (AIC) value, which measures the relative amount of lost information and therefore the quality of the model, can be computed as $4-2\max_{a,b>0}L(a,b|c;\check{\xi}_{2},\dots,\check{\xi}_{M})$.

We use the degradation data for carbon-film resistors shown in [Meeker and Escobar, 1998, Table C.3] \cite{ME}, which consist of a total of 29 resistors observed at 3 different temperatures - 83$^\circ$C, 133$^\circ$C, and 173$^\circ$C, and inspected at times $t_{0}=0$, $t_{1}=0.0452$, $t_{2}=0.103$, $t_{3}=0.4341$, and $t_{4}=0.8084$ in the unit of $10^{4}$ hours. The degradation is measured as the resistance level in the unit of ohms. We assume that the maximal degradation level permitted is universally set at 3.8 ohms.

Figure \ref{fig:4} on the next page shows the observed data in three groups. It is clear that the degradation paths of each resistor stabilize over time, with steeper trends in the beginning, regardless of the temperature, while under a higher temperature they tend to increase on a larger scale. In this respect, we can expect the average-tempered stable process, which captures decreasing variance, to serve as a good model.

\begin{figure}[H]
  \centering
  \begin{minipage}{0.32\linewidth}
  \centering
  \includegraphics[width=1.9in]{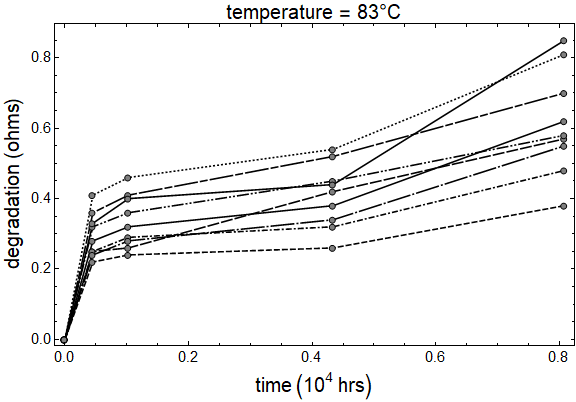}
  \end{minipage}
  \begin{minipage}{0.32\linewidth}
  \centering
  \includegraphics[width=1.9in]{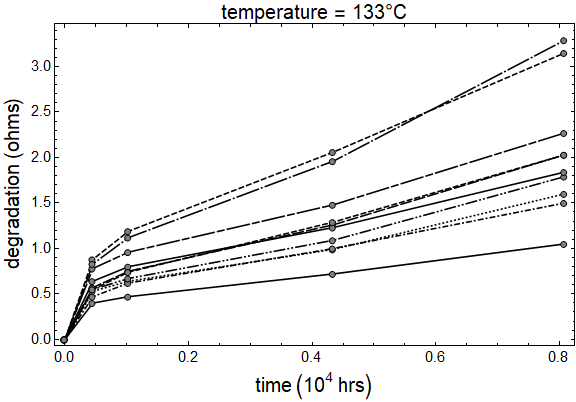}
  \end{minipage}
  \begin{minipage}{0.32\linewidth}
  \centering
  \includegraphics[width=1.9in]{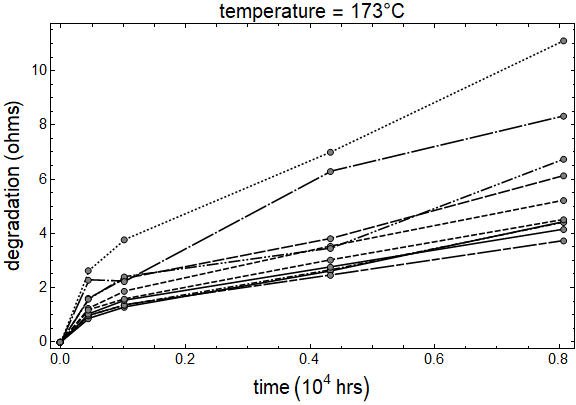}
  \end{minipage}
  \caption{Degradation data for carbon-film resistors}
  \label{fig:4}
\end{figure}

Using the transformation (\ref{4.1.2}) we then perform maximum likelihood estimation on each sample resistor using the gamma model, the average-gamma model, as well as the average-inverse Gaussian model, in proper order, taking only $c\searrow0$ or $c=1/2$. The parameter estimates together with the AIC values are reported in Table \ref{tab:2}. We note that in the 9th and 27th samples, some observed degradation values are negative, which jeopardize the likelihood function and applicability of the models, and the corresponding results are marked as ``NA''.

\begin{table}[H]\scriptsize
  \centering
  \caption{Maximum likelihood estimation results (rounded to 4 decimal places)}
  \label{tab:2}
  \begin{tabular}{c|c|c|c|c|c|c|c|c|c|c}
    \hline
    \multicolumn{2}{c|}{condition} & \multicolumn{3}{c|}{gamma} & \multicolumn{3}{c|}{average-gamma} & \multicolumn{3}{c}{average-inverse Gaussian} \\ \hline
    temperature & index & $\hat{a}$ & $\hat{b}$ & AIC & $\hat{a}$ & $\hat{b}$ & AIC & $\hat{a}$ & $\hat{b}$ & AIC \\ \hline
    \multirow{9}{*}{83$^\circ$C} & 1 & 5.4392 & 7.0920 & 1.2904 & 4.9303 & 6.0852 & 1.1573 & 0.5301 & 1.3451 & 1.5320 \\
     & 2 & 3.5653 & 7.5846 & $-1.7329$ & 3.1469 & 8.0278 & $-2.7409$ & 0.2296 & 1.0774 & $-2.0652$ \\
     & 3 & 4.9885 & 4.9786 & 3.7959 & 5.7035 & 6.1039 & 1.9077 & 0.6477 & 1.5095 & 2.3166 \\
     & 4 & 4.0520 & 6.8242 & 0.5441 & 2.4543 & 4.5074 & $-0.2534$ & 0.1941 & 0.3991 & 1.0559 \\
     & 5 & 7.3695 & 10.4518 & $-2.3447$ & 94.0743 & 145.3494 & $-9.9406$ & 2.7731 & 57.6728 & $-10.0055$ \\
     & 6 & 5.0906 & 7.0952 & 1.5135 & 15.3063 & 28.4388 & $-3.1779$ & 1.0332 & 11.5762 & $-2.7026$ \\
     & 7 & 5.6509 & 6.5260 & 2.6638 & 13.2296 & 18.3995 & $-1.1643$ & 1.0739 & 7.0064 & $-0.6723$ \\
     & 8 & 5.9010 & 8.6735 & 0.2632 & 5.3647 & 7.3965 & 0.5454 & 0.5412 & 1.7490 & 0.9919 \\
     & 9 & 4.2317 & 4.0246 & 4.4242 & NA & NA & NA & NA & NA & NA \\ \hline
    \multirow{10}{*}{133$^\circ$C} & 10 & 9.6038 & 7.3940 & 3.5696 & 29.8305 & 22.0453 & $-0.1489$ & 2.3710 & 9.6456 & 0.1960 \\
     & 11 & 15.0100 & 3.8521 & 10.7812 & 23.8076 & 5.1413 & 8.3236 & 3.9549 & 2.2915 & 8.7277 \\
     & 12 & 11.2990 & 5.7088 & 6.1122 & 16.6526 & 7.1502 & 4.6030 & 2.1930 & 2.7855 & 4.8861 \\
     & 13 & 12.2686 & 6.6120 & 5.8478 & 19.5432 & 9.2649 & 4.2671 & 2.3817 & 4.0050 & 4.6998 \\
     & 14 & 14.7550 & 5.8758 & 7.1451 & 18.7597 & 6.0847 & 6.3170 & 2.7384 & 2.4784 & 6.7307 \\
     & 15 & 12.9357 & 5.8420 & 6.3269 & 19.0215 & 7.0752 & 4.9993 & 2.5547 & 2.8366 & 5.2409 \\
     & 16 & 10.8622 & 3.8683 & 9.3775 & 18.9504 & 6.0107 & 6.3846 & 2.7820 & 2.4460 & 6.7885 \\
     & 17 & 17.3109 & 4.2535 & 10.1754 & 17.9011 & 3.4134 & 9.5417 & 3.4573 & 1.3653 & 9.9315 \\
     & 18 & 10.6035 & 4.6586 & 7.9710 & 19.2656 & 7.7084 & 5.1419 & 2.5294 & 3.2176 & 5.5790 \\
     & 19 & 15.8862 & 6.3263 & 6.8971 & 22.5786 & 7.3697 & 5.8624 & 3.0776 & 3.1701 & 6.2776 \\ \hline
    \multirow{10}{*}{173$^\circ$C} & 20 & 27.9058 & 5.0809 & 10.3513 & 24.4578 & 3.2859 & 10.7986 & 4.9838 & 1.4084 & 11.1860 \\
     & 21 & 17.8103 & 2.7529 & 14.2527 & 25.8732 & 3.3243 & 11.5411 & 5.6924 & 1.6805 & 11.7795 \\
     & 22 & 19.4770 & 1.4159 & 19.6218 & 23.6622 & 1.3633 & 16.2354 & 7.6026 & 0.6028 & 16.6437 \\
     & 23 & 23.1215 & 4.2288 & 11.1313 & 27.1266 & 3.7771 & 10.1776 & 5.1773 & 1.6326 & 10.5357 \\
     & 24 & 14.5942 & 1.9215 & 16.3966 & 12.7992 & 1.3547 & 14.5360 & 3.8504 & 0.5218 & 15.0424 \\
     & 25 & 11.1269 & 1.3326 & 17.9718 & 5.1440 & 0.4375 & 17.7285 & 2.1111 & 0.1012 & 18.3516 \\
     & 26 & 16.4732 & 3.5607 & 11.7835 & 26.3783 & 4.7561 & 9.2008 & 4.6630 & 2.2207 & 9.5611 \\
     & 27 & NA & NA & NA & NA & NA & NA & NA & NA & NA \\
     & 28 & 16.6790 & 3.2412 & 12.7439 & 21.4549 & 3.4603 & 10.6977 & 4.4713 & 1.6338 & 11.0347 \\
     & 29 & 17.1961 & 3.0755 & 12.8759 & 47.2922 & 7.1358 & 8.3138 & 7.0149 & 3.5197 & 8.5934 \\
    \hline
  \end{tabular}
\end{table}

From comparing the AIC values across each sample, we conclude that the models based on the average processes perform better than the traditional gamma model in general. More specifically, the performance of the average-gamma model is superior to that of the average-inverse Gaussian model, except for the 5th sample, and only for the 1st, 20th, and 25th samples does the average-inverse Gaussian model perform worse than the gamma model. These comments highlight to some extent the conspicuousness of stabilization in the degradation of carbon-film resistors, which certainly cannot be taken into account by any L\'{e}vy process. Of course, estimation of the average-gamma and average-inverse Gaussian model parameters will be more robust if a larger data set with more frequent observations is available, and the performance can also be improved if the family parameter $c$ is incorporated, in one way or another, into the estimation procedure.

Next we compute the survival probability of the sample resistors subject to the barrier of 3.8 ohms $10^{4}$ hours from the initial observation, using (\ref{4.1.1}) and the estimated parameters. Since $\check{\xi}_{1}=\check{\tilde{X}}_{t_{1}}$, under the average-gamma and average-inverse Gaussian models we should use $T=1-t_{1}=0.9548$ and $l-\underline{D}=3.8-\check{\tilde{X}}_{t_{1}}$ for each sample. Table \ref{tab:3} shows the results.

\begin{table}[H]\scriptsize
  \centering
  \caption{Survival probabilities (rounded to 6 decimal places)}
  \label{tab:3}
  \begin{tabular}{c|c|c|c|c}
    \hline
    \multicolumn{2}{c|}{condition} & \multicolumn{3}{c}{survival probability} \\ \hline
    temperature & index & gamma & average-gamma & average-inverse Gaussian \\ \hline
    \multirow{9}{*}{83$^\circ$C} & 1 & 1.000000 & 1.000000 & 0.999853 \\
     & 2 & 1.000000 & 1.000000 & 0.999892 \\
     & 3 & 1.000000 & 1.000000 & 0.999851 \\
     & 4 & 0.999960 & 1.000000 & 0.996929 \\
     & 5 & 1.000000 & 1.000000 & 1.000000 \\
     & 6 & 1.000000 & 1.000000 & 1.000000 \\
     & 7 & 0.999999 & 1.000000 & 1.000000 \\
     & 8 & 1.000000 & 1.000000 & 0.999975 \\
     & 9 & 0.999764 & NA & NA \\ \hline
    \multirow{10}{*}{133$^\circ$C} & 10 & 0.999983 & 1.000000 & 1.000000 \\
     & 11 & 0.495816 & 0.900246 & 0.890481 \\
     & 12 & 0.994656 & 0.999995 & 0.999754 \\
     & 13 & 0.998355 & 1.000000 & 0.999992 \\
     & 14 & 0.963505 & 0.999527 & 0.997138 \\
     & 15 & 0.986878 & 0.999968 & 0.999390 \\
     & 16 & 0.874649 & 0.998246 & 0.993390 \\
     & 17 & 0.420170 & 0.767436 & 0.765238 \\
     & 18 & 0.972725 & 0.999989 & 0.999731 \\
     & 19 & 0.968141 & 0.999878 & 0.999054 \\ \hline
    \multirow{10}{*}{173$^\circ$C} & 20 & 0.038494 & 0.241870 & 0.264384 \\
     & 21 & 0.023811 & 0.071724 & 0.058346 \\
     & 22 & 0.000002 & 0.000002 & 0.000000 \\
     & 23 & 0.057259 & 0.224485 & 0.244850 \\
     & 24 & 0.011282 & 0.028665 & 0.028403 \\
     & 25 & 0.013324 & 0.077709 & 0.143482 \\
     & 26 & 0.245161 & 0.641475 & 0.654520 \\
     & 27 & NA & NA & NA \\
     & 28 & 0.136732 & 0.429712 & 0.448145 \\
     & 29 & 0.077529 & 0.153176 & 0.149100 \\
    \hline
  \end{tabular}
\end{table}

We see that, relative to the gamma model, the two models based on average processes are prone to making more conservative predictions about survival, and more so as temperature rises. In general, it is understood that under a higher temperature the resistors undergo more violent degradation, leading to vastly dropping survival probabilities.

Noted from Table \ref{tab:3} that the 17th sample resistor features the closest probabilities to 0.5, using the corresponding parameter values in Table \ref{tab:2}, we compute the median lifetime of the 17th sample, which are approximately 0.952899, 1.205356, and 1.225898 in $10^{4}$ hours, under the gamma model, the average-gamma model, and the inverse-Gaussian model, in proper order, all of which are close to $T=1$, whereas implications from the average process models are more optimistic relative to the gamma model.

One can also apply Euler's discretization scheme to simulate the degradation paths. Let $M_{i}$ be the total number of discretization points and the simulation can be carried out on the unit time interval as follows,
\begin{align*}
  &\hat{X}_{0}=0\rightsquigarrow\hat{X}_{(i+1)(1/M_{i})}=\hat{X}_{i(1/M_{i})}+\xi_{i+1},\\
  &\hat{X}'_{0}=0\rightsquigarrow\hat{X}'_{(i+1)(1/M_{i})}=\hat{X}'_{i(1/M_{i})}+X_{(i+1)(1/M_{i})},\\
  &\hat{\tilde{X}}_{i(1/M_{i})}=\frac{\hat{X}'_{i(1/M_{i})}}{i(1/M_{i})},\quad i\in\mathds{N}\cap[0,M_{i}].
\end{align*}
where $\{\xi_{i}\}$ for $i\geq1$ is a sequence of \text{i.i.d.} $\mathrm{TS}(a(1/M_{i}),b;c)$ random variables. The trajectories of $\{\hat{X}_{i}\}$ and $\{\hat{\tilde{X}}_{i}\}$ are approximations of the sample paths of $X$ and $\tilde{X}$, respectively. Figure \ref{fig:5} focuses on the 17th sample and realizes 100 degradation paths based on $M_{i}=2000$ under each of the three models applied, where for comparison both the barrier and the observed degradation path from the data are included. For the two average process models the time scale is also curtailed by the initial $t_{1}=0.0452$ in consideration of $\check{\xi}_{1}>0$.

\begin{figure}[H]
  \centering
  \begin{minipage}{0.32\linewidth}
  \centering
  \includegraphics[width=1.9in]{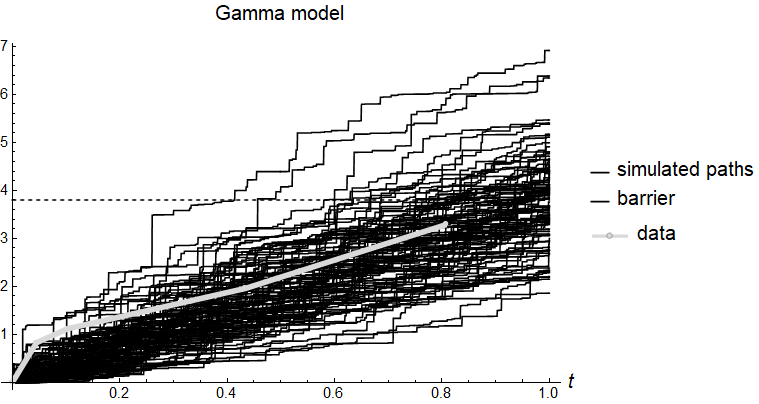}
  \end{minipage}
  \begin{minipage}{0.32\linewidth}
  \centering
  \includegraphics[width=1.9in]{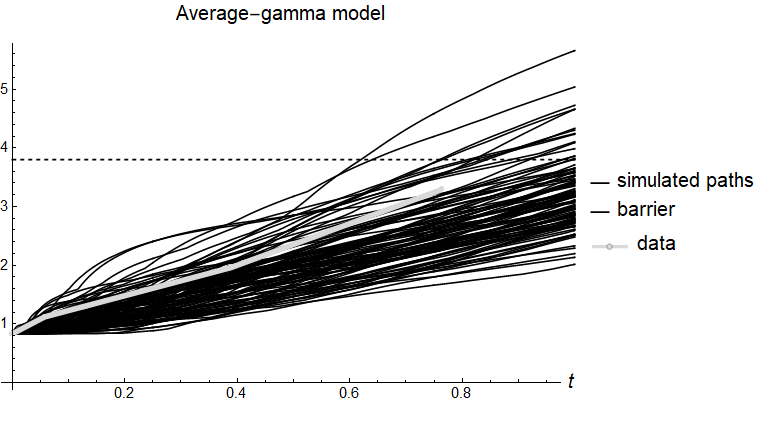}
  \end{minipage}
  \begin{minipage}{0.32\linewidth}
  \centering
  \includegraphics[width=1.9in]{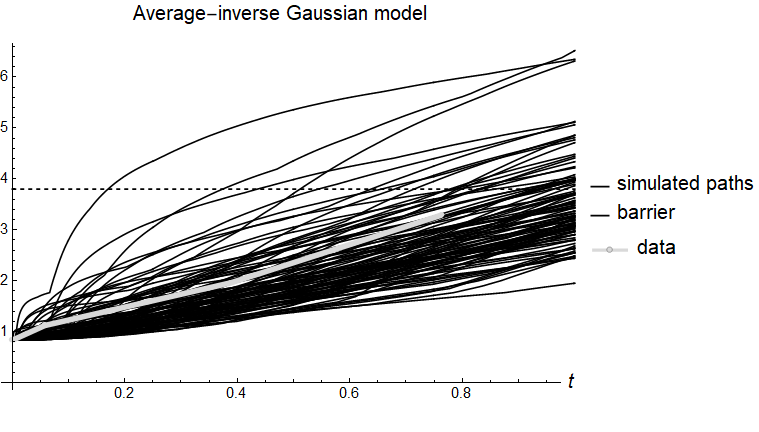}
  \end{minipage}
  \caption{Simulated degradation paths for 17th sample}
  \label{fig:5}
\end{figure}

Observably, the realized paths under the gamma process cross the barrier without touching it. In fact, since $X$ is a subordinator with infinite activity it hits any horizontal barrier with probability 0. However, the realized continuous paths under the average processes, as described by $\tilde{X}$, cross the barrier by touching it indeed. Furthermore, apart from path continuity, the average processes are more suitable for the actual degradation behavior of the resistors due to their stabilizing trends in nature.

\vspace{0.2in}

\subsection{Gaussian mixture and financial derivatives pricing}

Recall that the average-tempered stable subordinator $\Lambda$ induced by the distribution of $\tilde{X}_{t}$ for $t\geq0$ has nonnegative and nondecreasing sample paths, and therefore can be used as a random time change. In connection with this let $W\equiv(W_{t})$ be a standard Brownian motion which is independent from $\Lambda$ and define the corresponding time-changed drifted Brownian motion
\begin{equation*}
  H_{t}:=\kappa t+\mu\Lambda_{t}+\sigma W_{\Lambda_{t}},\quad t\geq0,
\end{equation*}
where $\kappa\in\mathds{R}$ is the drift parameter, $\mu\in\mathds{R}$ the Brownian location parameter and $\sigma>0$ the Brownian scale parameter. Since the sample paths of $\Lambda$ are discontinuous $\PP$-a.s., $H$ is automatically a purely discontinuous L\'{e}vy process, which is also of infinite activity. Since $\mu t+\sigma W_{t}$ is normally distributed with mean $\mu t$ and variance $\sigma^{2}t$ for any $t>0$, by the law of total expectation, using Theorem \ref{th:1} it can be shown that $H_{t}$ has the Laplace transform
\begin{equation}\label{4.2.1}
  \bar{f}_{H_{t}}(u):=\E e^{-uH_{t}}=e^{-\kappa tu}\bar{f}_{\tilde{X}_{t}}\bigg(\mu u-\frac{1}{2}\sigma^{2}u^{2}\bigg),
\end{equation}
which is well-defined for
\begin{equation*}
  u\in\mathds{C}\setminus\bigg(\bigg(-\infty,\frac{\mu-\sqrt{\mu^{2}+2b\sigma^{2}}}{\sigma^{2}}\bigg]\cup \bigg[\frac{\mu+\sqrt{\mu^{2}+2b\sigma^{2}}}{\sigma^{2}},\infty\bigg)\bigg).
\end{equation*}
This is substantially identical to the construction of the well-known variance gamma process as a gamma time-changed Brownian motion with drift in [Madan and Seneta, 1990] \cite{MS}. In light of the relation (\ref{4.2.1}) together with Theorem \ref{th:1}, it is easily justifiable that $H$ has the Blumenthal-Getoor index $\mathbf{B}(H)=2\mathbf{B}(\Lambda)=2c\in(0,2)$, which implies that $H$ has sample paths of finite variation if and only if $c\in(0,1/2)$, as in the case of the average-gamma subordinator - and not the average-inverse Gaussian subordinator. Further, by the Bayes theorem and the time-shifting property of Laplace transforms we employ Theorem \ref{th:2} to obtain the density function of $H_{t}$ for any fixed $t>0$,
\begin{align}\label{4.2.2}
  f_{H_{t}}(x)&:=\frac{\PP\{H_{t}\in\dd x\}}{\dd x}\\
  &=\int^{\infty}_{0}\frac{e^{-(x-\kappa t-\mu w)^{2}/(2\sigma^{2}w)}}{\sqrt{2\pi\sigma^{2}w}}f_{\tilde{X}_{t}}(w)\dd w \nonumber\\
  &=\frac{be^{-atb^{c}\Gf(-c)}}{\pi}\int^{1}_{0}\frac{\dd y}{y^{3/2}}\;\exp\Bigg(-atb^{c}\Gf(-c-1)\Bigg(y+\cos(\pi c)(1-y)\bigg(\frac{1}{y}-1\bigg)^{c}\Bigg)\Bigg) \nonumber\\
  &\qquad\times\sin\Bigg(atb^{c}\sin(\pi c)\Gf(-c-1)(1-y)\bigg(\frac{1}{y}-1\bigg)^{c}\Bigg)\frac{\psi(x-\kappa t,y)}{\sqrt{\mu^{2}y+2b\sigma^{2}}},\quad x\in\mathds{R}, \nonumber
\end{align}
in which
\begin{equation*}
  \psi(x,y)=\exp\frac{\mu x-|x|\sqrt{\mu^{2}+2b\sigma^{2}/y}}{\sigma^{2}},\quad(x,y)\in\mathds{R}\times(0,1).
\end{equation*}

As a L\'{e}vy process, $H$ automatically constitutes a model with jumps for risky financial asset prices. For example, consider a stock whose risk-neutralized\footnote{For a resume on the risk-neutral pricing theory of financial assets one may refer to [Schoutens, 2003] \cite{S1} as well as [Lyasoff, 2017] \cite{L2}.} log price evolves according to
\begin{equation*}
  \log S_{t}=\log\frac{S_{0}}{\bar{f}_{H_{t}}(-1)}+H_{t},\quad t\geq0,
\end{equation*}
with $e^{H}$ assumed to be integrable and the observed initial stock price $S_{0}$. Equivalently, the stock price process is given by the ordinary exponential $S=S_{0}e^{H}$. Then, the price of a European-style call option written on this stock with strike price $K$ and maturity $T>0$ can be computed as
\begin{equation}\label{4.2.3}
  \Pi^{\text{call}}_{0}=\E\big(e^{-rT}(S_{T}-K)^{+}\big)=S_{0}e^{-qT}\breve{P}-Ke^{-rT}P^{\ast},
\end{equation}
where the in-the-money probabilities are given by
\begin{equation*}
  P^{\ast}=\int^{\infty}_{\log(K/S_{0})-\kappa T}f_{H_{T}}(z)\dd z\quad\text{and}\quad
  \breve{P}=\int^{\infty}_{\log(K/S_{0})-\kappa T}zf_{H_{T}}(z)\dd z,
\end{equation*}
and $f_{H_{T}}$ is the probability density function of $H_{T}$ as given in (\ref{4.2.2}), which is already risk-neutral. According to [Bakshi and Madan, 2000] \cite{BM}, the above probabilities can be expressed in terms of numerical Fourier inverses based on (\ref{4.2.1}), i.e.,
\begin{align*}
  P^{\ast}&=\frac{1}{2}+\frac{1}{\pi}\int^{\infty}_{0}\Re\frac{(S_{0}\bar{f}_{H_{T}}(-1)/K)^{\ii u}\bar{f}_{H_{T}}(-\ii u)}{\ii u}\dd u,\\
  \breve{P}&=\frac{1}{2}+\frac{1}{\pi}\int^{\infty}_{0}\Re\frac{(S_{0}\bar{f}_{H_{T}}(-1)/K)^{\ii u}\bar{f}_{H_{T}}(-\ii u-1)}{\ii u\bar{f}_{H_{T}}(-1)}\dd u.
\end{align*}
Alternatively, Fubini's theorem applied to (\ref{4.2.2}) implies that $P^{\ast}$ and $\breve{P}$ can also be viewed as the probability density function $f_{H_{T}}$ evaluated at $x=\log(K/S_{0})-\kappa T$ with the function $\psi$ replaced by $\Psi^{\ast}(x,y):=\int^{\infty}_{x}\psi(z,y)\dd z$ and $\breve{\Psi}(x,y):=\int^{\infty}_{x}z\psi(z,y)\dd z$, respectively, for $(x,y)\in\mathds{R}\times(0,1)$, both of which can be evaluated explicitly.

Of course, according to the put-call parity, the price of a similar put option is
\begin{equation}\label{4.2.4}
  \Pi^{\text{put}}_{0}=\E\big(e^{-rT}(K-S_{T})^{+}\big)=\Pi^{\text{call}}_{0}+Ke^{-rT}-S_{0}e^{-qT}.
\end{equation}

Now we perform a simple calibration study on European-style Bitcoin option prices, whose return distribution is commonly known to have very large skewness and kurtosis (see, e.g., [Troster et al, 2019] \cite{TTSM}). In concrete, our data set consists of 40 Bitcoin option prices (denoted $\check{\Pi}^{\text{call}}$'s) quoted at the end of July 11, 2020 (data source: [Deribit, 2020] \cite{D}), with four different maturities $T=19,47,166,257$ days and 10 quotes per maturity. Strike prices range from \$3000 to \$32000. On that date, the Bitcoin index closed at $S_{0}=\$9232.98$.

We notice that the family parameter $c$ cannot be stably calibrated which but increases uncertainty for the values of other parameters and hence focus on four special cases $c\searrow0$ (gamma), $c=0.25$, $c=0.5$ (inverse Gaussian) and $c=0.75$. In other words, only four parameters will be calibrated on the yearly basis: $a>0$, $b>0$, $\mu\in\mathds{R}$ and $\sigma>0$; our optimization program targets minimizing the average relative pricing error (ARPE) so that the optimal parameters are given by
\begin{equation*}
  \{\hat{a},\hat{b},\hat{\mu},\hat{\sigma}\}=\underset{a>0,b>0,\mu\in\mathds{R},\sigma>0} {\arg\min}\frac{1}{40}\sum_{K,T}\frac{\big|\Pi^{\text{call}}_{0}-\check{\Pi}^{\text{call}}\big|}{\check{\Pi}^{\text{call}}},
\end{equation*}
where the sum is understood to act over all available strike prices and maturities. For comparison purposes we consider two models, one using $H$ and the other using a similar Gaussian mixture of the tempered stable subordinator $X$ without the averaging effect.

In Table \ref{tab:4} we report the calibration results including the optimal parameter values, the corresponding pricing error, and the CPU time measured in seconds\footnote{The optimization program is written in Mathematica$^\circledR$ ([Wolfram Research Inc., 2015] \cite{W}) and run on a personal laptop computer with an Intel(R) Core(TM) i5-7200 CPU @ 2.50GHz 2.71GHz.}. The model with the best fit is also marked a ``$\star$''.

\begin{table}[H]\scriptsize
  \centering
  \caption{Calibration results (rounded to 6 significant digits)}
  \label{tab:4}
  \begin{tabular}{c|c|c|c|c|c|c|c}
    \hline
    Subgroup & averaging & $\hat{a}$ & $\hat{b}$ & $\hat{\mu}$ & $\hat{\sigma}$ & ARPE & CPU time \\ \hline
    \multirow{2}*{$c\searrow0$ (gamma)} & yes & 59.8951 & 40.8853 & 0.133763 & 0.876932 & 0.244421 & 159.266 \\
    & no & 68.9358 & 40.2262 & 0.984772 & 0.523648 & 0.254048 & 469.531 \\ \hline
    \multirow{2}*{$c=0.25$} & yes & 9.42973 & 9.80518 & -0.827665 & 0.754444 & 0.228378 & 138.609 \\
    & no & 8.06212 & 9.19824 & $-0.429364$ & 0.562126 & 0.230562 & 258.219 \\ \hline
    \multirow{2}*{$c=0.5$ (inverse Gaussian)} & yes & 1.46874 & 0.682686 & $-0.594359$ & 0.635236 & 0.202169$\star$ & 270.172 \\
    & no & 1.04925 & 3.6108 & $-0.999401$ & 0.799872 & 0.210823 & 260.172 \\ \hline
    \multirow{2}*{$c=0.75$} & yes & 0.45265 & 3.0504 & $-0.961423$ & 0.983674 & 0.220485 & 146.234 \\
    & no & 0.222002 & 3.17002 & $-0.996819$ & 0.97734 & 0.223168 & 123.031 \\
    \hline
  \end{tabular}
\end{table}

We can see that for each choice of $c$ using $H$ with the averaging effect indeed improves the model fit. An explanation is of course the enhanced asymmetric leptokurtic feature of the $\mathrm{ATS}(a,b;c)$ distribution relative to the $\mathrm{TS}(a,b;c)$. Besides, the fit with $c>0$ is significantly better than in the extremal case $c\searrow0$, which can be interpreted as the trajectories of Bitcoin returns being more irregular than those of a variance gamma process. Needless to say, the pricing model based on $H$ can be implemented very efficiently. The best model fit according to Table \ref{tab:4} is further visualized in Figure \ref{fig:6} below. Observably, this pricing model fits quite commendably for the options with short maturities by incorporating jumps, whilst the existing discrepancy for those out-of-the-money long-maturity (\text{esp.} $T=257$ days) option prices can be attributed to the lack of volatility clustering.

\begin{figure}[H]
  \centering
  \includegraphics[width=4in]{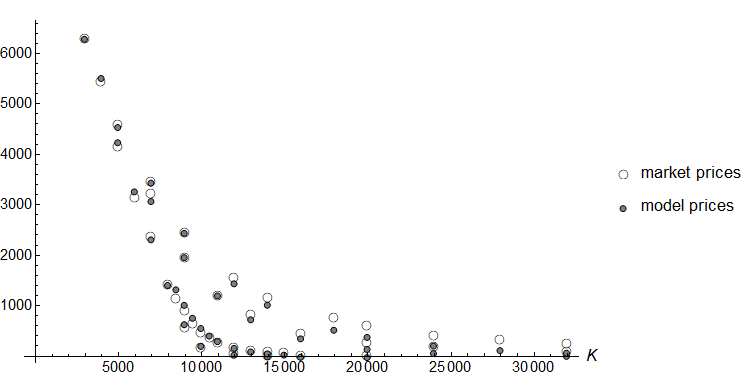}
  \caption{Bitcoin option prices (market vs model)}
  \label{fig:6}
\end{figure}

To simulate the sample paths of $\Lambda$, and therefore those of $H$, it is preferable to employ compound Poisson approximations (see, e.g., [Rydberg, 1997] \cite{R2}) based on the purely discontinuous path behavior described in Corollary \ref{cr:1}, whereas sampling directly from the distribution in Theorem \ref{th:3} seems cumbersome. The simulation procedure starts from approximating the jump intensity measure $\tilde{\nu}$ on $\mathds{R}_{++}$ by considering the intervals $\{(x_{j},x_{j+1}]\}^{M_{j}}_{j=1}$ with $x_{1}\geq\epsilon$ for some small $\epsilon>0$. After simulating $M_{j}$ stochastically independent homogeneous Poisson processes over the discretized unit time interval, denoted $\hat{N}^{(j)}\equiv\big\{\hat{N}^{(j)}_{i}\big\}^{M_{i}}_{i=0}$, with corresponding intensity $\int^{x_{j+1}}_{x_{j}}\tilde{\ell}(x)\dd x$, the paths of $\Lambda$ can be approximated as
\begin{equation*}
  \hat{\Lambda}_{i(1/M_{i})}=\tilde{\alpha}i(1/M_{i})+\sum^{M_{j}}_{j=1}\chi_{j} \Bigg(\hat{N}^{(j)}_{i(1/M_{i})}-\int^{x_{j+1}}_{x_{j}}\tilde{\ell}(x)\dd x\mathds{1}_{\{\chi_{i}<1\}}i(1/M_{i})\Bigg),
\end{equation*}
where $\tilde{\alpha}$ and $\tilde{\ell}$ are as given in Corollary \ref{cr:1} and
\begin{equation*}
  \chi_{j}:=\sqrt{\frac{\int^{x_{j+1}}_{x_{j}}x^{2}\tilde{\ell}(x)\dd x}{\int^{x_{j+1}}_{x_{j}}\tilde{\ell}(x)\dd x}},\quad j\in\mathds{N}\cap[1,M_{j}].
\end{equation*}
Each of the integrals inside the sum can be evaluated numerically with high efficiency, despite that each one has an explicit but yet lengthy expression available, which is omitted here. Once a path of $\hat{\Lambda}_{i(1/M_{i})}$ is obtained,
\begin{equation*}
  \hat{H}_{0}=0\rightsquigarrow\hat{H}_{(i+1)(1/M_{i})}=\mu(\hat{\Lambda}_{(i+1)(1/M_{i})}-\hat{\Lambda}_{i(1/M_{i})}) +\sigma\sqrt{\big|\hat{\Lambda}_{(i+1)(1/M_{i})}-\hat{\Lambda}_{i(1/M_{i})}\big|}\varsigma_{i},
\end{equation*}
for $i\in\mathds{N}\cap[1,M_{i}]$, where $\varsigma_{i}$'s are \text{i.i.d.} standard normal random variables, gives an approximation of the corresponding path of $H$.

For example, using the calibrated parameter values of the best-fit model in Table \ref{tab:4}, we notice that $\ell(7)<10^{-4}$. Therefore, taking $M_{j}=100$, $x_{1}=\epsilon=10^{-10}$ and $x_{M_{j}+1}=7+\epsilon\approx7$ with equally spaced intervals, and $M_{i}=2000$ as before, in Figure \ref{fig:7} we plot a realized sample path of $\Lambda$ and its Gaussian mixture $H$, as well as the corresponding realized path of the Bitcoin price $S_{0}e^{H}$ with $S_{0}=\$9232.98$, all of which are purely discontinuous.

\begin{figure}[H]
  \centering
  \begin{minipage}{0.49\linewidth}
  \centering
  \includegraphics[width=2.7in]{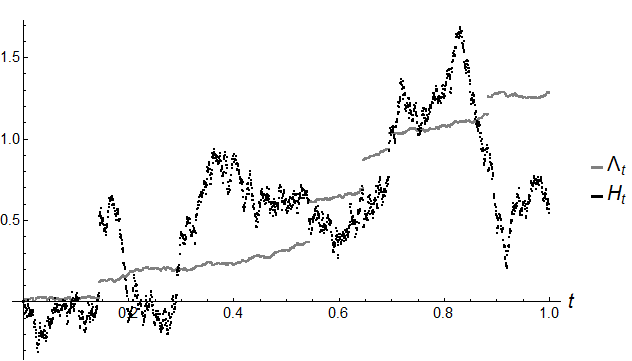}
  \end{minipage}
  \begin{minipage}{0.49\linewidth}
  \centering
  \includegraphics[width=2.7in]{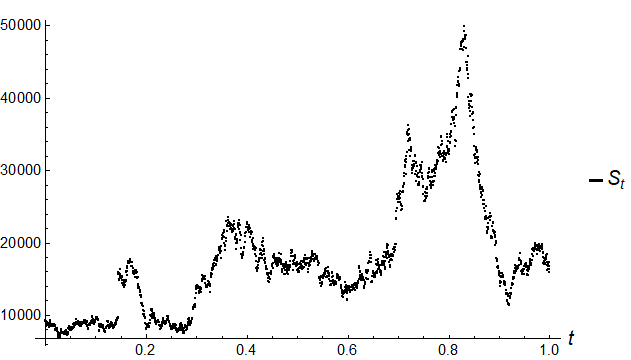}
  \end{minipage}
  \caption{Realized sample paths of $\Lambda$, $H$, and Bitcoin price}
  \label{fig:7}
\end{figure}

\vspace{0.2in}

\section{Concluding remarks}\label{sec:5}

Consideration of the running average of a tempered stable subordinator gives rise to a new family of infinitely divisible three-parameter probability distributions which are referred to as the average-tempered stable ($\mathrm{ATS}(a,b;c)$) distributions. These newfound distributions are very similar to the well-studied tempered stable distributions in that they are all positively skewed, heavy-tailed, and unimodal, whereas they exhibit severer asymmetric and leptokurtic feature subject to the same mean and variance - a result of the averaging effect. Also, the average-tempered stable distributions have a Laplace transform in simple closed form while the density and distribution functions are expressible in terms of proper definitely integrals which are very comfortable to work with. Special cases include the average-gamma distribution and the average-inverse Gaussian distribution, which are obtained, respectively, by taking the family parameter $c$ to tend to 0 or equal 1/2.

While the running average process is by construction continuous and increasing, the infinite divisibility feature also gives birth to previously unknown subordinators, the so-called ``average-tempered stable subordinators'', whose drift component and jump intensity measures are determined fully explicitly. As for applications, using the running average process is capable of modeling degradation phenomena with memory in a continuous fashion, as well as capturing the mean reversion and decreasing variance properties typically observed in reality. This beyond doubt forms a huge advantage over commonly used gamma models that are obviously purely discontinuous and Markovian. On the other hand, Gaussian mixtures of the average-tempered stable subordinators can be applied to establishing financial derivatives pricing models that can capture jumps in the returns and are suitable for highly heavy-tailed returns, such as those in the crytocurrency market. The resulting pricing methods are fairly efficient thanks to Fourier transform techniques. Of course, like any other existing L\'{e}vy-type pricing models they can also be associated with an additional stochastic volatility process to capture volatility cluster effect, which is deemed essential for long-maturity derivative prices. Last but not least, simulation of the running average process as well as the average-tempered stable subordinators and their Gaussian mixtures can be conveniently realized by means of Euler discretization and compound Poisson approximations.

\vspace{0.2in}

\end{document}